\documentclass[a4paper,11pt]{amsart}

\usepackage{comment}

\usepackage[hang,small,bf]{caption}
\usepackage[subrefformat=parens]{subcaption}
\usepackage{etoolbox}
\ifdef{\crop}{%
\usepackage[includeheadfoot,twoside=False,paperwidth=448pt,paperheight=587pt,rmargin=15pt,lmargin=15pt,tmargin=15pt,bmargin=15pt]{geometry}%
}{%
\setlength{\topmargin}{22mm}
\addtolength{\topmargin}{-1in}
\setlength{\oddsidemargin}{27mm}
\addtolength{\oddsidemargin}{-1in}
\setlength{\evensidemargin}{27mm}
\addtolength{\evensidemargin}{-1in}
\setlength{\textwidth}{156mm}
\setlength{\textheight}{240mm}
}%

\usepackage{mathtools}
\usepackage{color}
\usepackage[active]{srcltx}
\usepackage{amsmath,amsthm,amsxtra}
\usepackage{amssymb}

\usepackage[mathscr]{eucal}

\usepackage{bm}
\usepackage[all]{xy}

\usepackage{aliascnt}

\usepackage{tabularx}
\newcolumntype{C}{>{\centering\arraybackslash}X} 

\theoremstyle{plain}
\newtheorem{thm}{Theorem}[section]
\newtheorem*{thm*}{Theorem}

\newaliascnt{prop}{thm}
\newaliascnt{cor}{thm}
\newaliascnt{lem}{thm}
\newaliascnt{claim}{thm}
\newaliascnt{defn}{thm}
\newaliascnt{ques}{thm}
\newaliascnt{conj}{thm}
\newaliascnt{fact}{thm}
\newaliascnt{rem}{thm}
\newaliascnt{ex}{thm}
\newaliascnt{sett}{thm}
\newtheorem{prop}[prop]{Proposition}
\newtheorem{cor}[cor]{Corollary}
\newtheorem{lem}[lem]{Lemma}
\newtheorem{claim}[claim]{Claim}
\newtheorem*{prop*}{Proposition}
\newtheorem*{cor*}{Corollary}
\newtheorem*{lem*}{Lemma}
\newtheorem*{claim*}{Claim}
\theoremstyle{definition}
\newtheorem{defn}[defn]{Definition}

\newtheorem*{defn*}{Definition}
\newtheorem*{ques*}{Question}
\newtheorem*{conj*}{Conjecture}

\newtheorem*{prob*}{Problem}

\newtheorem{rem}[rem]{Remark}
\newtheorem{ex}[ex]{Example}
\newtheorem*{fact*}{Fact}
\newtheorem*{rem*}{Remark}
\newtheorem*{ex*}{Example}
\aliascntresetthe{prop}
\aliascntresetthe{cor}
\aliascntresetthe{lem}
\aliascntresetthe{claim}
\aliascntresetthe{defn}
\aliascntresetthe{ques}
\aliascntresetthe{conj}
\aliascntresetthe{fact}
\aliascntresetthe{rem}
\aliascntresetthe{ex}
\aliascntresetthe{sett}

\usepackage[pointedenum]{paralist}
\usepackage{varioref}
\labelformat{equation}{\textnormal{(#1)}}
\labelformat{enumi}{\textnormal{(#1)}}

\usepackage[a4paper]{hyperref}

\def\textsectionN~{\textsection{}}

\renewcommand\phi{\varphi}
\renewcommand\epsilon{\varepsilon}
\renewcommand\leq{\leqslant}
\renewcommand\geq{\geqslant}

\makeatletter
\newcommand{\set}{  \@ifstar{\@setstar}{\@set}}\newcommand{\@setstar}[2]{\{\, #1 \, ; \,  #2 \,\}}
\newcommand{\@set}[1]{\{ #1 \}}
\makeatother

\newcommand{\trans}[1][1]{\raisebox{#1ex}{\scriptsize\kern0.1em$t$\kern-0.1em}}

\DeclareMathOperator{\Hom}{Hom}
\DeclareMathOperator{\im}{im}

\DeclareMathOperator{\Pic}{Pic}

\DeclareMathOperator{\Supp}{Supp}

\DeclareMathOperator{\rank}{rank}

\DeclareMathOperator{\coker}{coker}

\DeclareMathOperator{\Ext}{Ext}
\DeclareMathOperator{\ch}{ch}
\DeclareMathOperator{\NS}{NS}
\DeclareMathOperator{\Coh}{Coh}

\def\Z{\mathbb{Z}}
\def\Q{\mathbb{Q}}
\def\R{\mathbb{R}}
\def\C{\mathbb{C}}

\def\r+{\mathbb{R}_{\geq 0}}

\def\ep{\varepsilon}

\def\r+{{\R}_{\geq 0}}
\def\q+{{\Q}_{\geq 0}}
\def\P{\mathbb{P}}
\def\*c{\C^{\times}}

\def\<{\langle}
\def\>{\rangle}

\def\op1{\calo_{\P^1}}

\def\C{\mathbb {C}}

\def\Q{\mathbb {Q}}
\def\R{\mathbb {R}}

\def\Z{\mathbb {Z}}

\newcommand{\calf}{\mathcal {F}}

\newcommand{\calh}{\mathcal {H}}
\newcommand{\cali}{\mathcal {I}}

\newcommand{\calo}{\mathcal {O}}

\newcommand{\calt}{\mathcal {T}}

\newcommand{\ul}{\underline{l}}
\newcommand{\un}{\underline{n}}
\newcommand{\uh}{\underline{h}}

\makeatletter
  
  \@addtoreset{equation}{section}
\makeatother

\title
[Remarks on basepoint-freeness thresholds of polarized  abelian surfaces]
{Remarks on basepoint-freeness thresholds of polarized abelian surfaces}

\author[A.~Ito]{Atsushi~Ito}
\address{Department of Mathematics, Institute of Pure and Applied Sciences, University of Tsukuba, Tsukuba, Ibaraki 305-8571, Japan}
\email{ito-atsushi@math.tsukuba.ac.jp}

\subjclass[2020]{14K05, 14C20}
\keywords{Abelian surface, Basepoint-freeness threshold, Semihomogeneous bundle}

\begin{document}

\begin{abstract}
We determine the basepoint-freeness threshold of a very general polarized abelian surface over the field of complex numbers. 
We also give the first example of a polarized abelian surface whose basepoint-freeness threshold is irrational.
\end{abstract}

\maketitle

\section{Introduction}

Throughout this paper, 
we work over the field of complex numbers $\C$.
Let $(X,\underline{l})$ be a polarized abelian variety.
In \cite{MR4157109}, Jiang and Pareschi introduce the \emph{basepoint-freeness threshold} $\beta(X,\underline{l})$.
This invariant turns out to be a powerful tool in the study of linear series on abelian varieties, such as syzygies and jet separation \cite{MR4157109, MR4114062, MR4781959, MR4474904}.
While several bounds on  $\beta(X,\underline{l})$ have been established under various geometric assumptions
(e.g.\ \cite{MR4518246, MR4626890, MR4689554, MR4395094, MR4589526, MR4363833}),
there are still only a few examples where the exact value is known.

When $X$ is an abelian surface, building on earlier joint work with Lahoz \cite{MR4579982},
Rojas \cite{MR4363833} investigates $\beta(X, \underline{l})$ using Bridgeland stability conditions and obtains the following result.

\begin{thm}[{part of \cite[Theorem A (3)]{MR4363833}}]\label{thm_Rojas}
Let $(X,\underline{l})$ be a polarized abelian surface of type $(1,d)$ such that the intersection number $(\underline{l} \cdot D)$ is divisible by $2d$ for every divisor $D$ on $X$.
If $d$ is not a perfect square,
then $\beta(X,\underline{l}) \in  \left\{ \frac{2y_0}{x_0-1}, \frac{2y_1}{x_1-1} \right\}$, where $(x_0,y_0)$ and $(x_1,y_1)$  are the minimal and the second minimal positive solutions to  Pell's equation $x^2-4d y^2=1$, respectively.
In particular,
$\beta(X,\underline{l}) \leq  \frac{2y_0}{x_0-1}$.
\end{thm}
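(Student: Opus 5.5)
A natural route is the one in \cite{MR4363833}, but phrased through the characterization of $\beta(X,\underline{l})$ by cohomological rank functions. Recall that for rational $x=a/b$ one has $\beta(X,\underline{l})\leq x$ if and only if $\mu_b^*\cali_o\otimes L^{b^2 x}$ is IT(0), where $\mu_b$ denotes multiplication by $b$ and $\cali_o$ the ideal sheaf of the origin. Moreover $\beta(X,\underline{l})<1$ exactly when $L$ is basepoint-free. So the task is to compute the threshold $x$ at which $h^1_{\cali_o}(x\underline{l})$ starts to vanish.

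\textbf{Step 1: reduce to Bridgeland stability.} On an abelian surface, $h^i_{\cali_o}(x\underline{l})$ can be read off from the (semi)stability of $\cali_o$, or its Fourier--Mukai transform, along the ray of stability conditions $\sigma_{\alpha,\beta}$ with $\beta=-x$. This is the approach of Lahoz--Rojas \cite{MR4579982}. The vanishing of $h^1$ fails exactly when there is a destabilizing subobject $F\hookrightarrow \cali_o$ (or quotient) along a wall intersecting the vertical line $\beta=-x$. Hence $\beta(X,\underline{l})$ equals the supremum of $-\beta$ over the walls for $\cali_o$ that meet the relevant region.

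\textbf{Step 2: use the hypothesis on $(\underline{l}\cdot D)$.} Write $\ch(F)=(r,c\,\underline{l}+D',\chi)$, with $D'$ orthogonal to $\underline{l}$ in $\NS(X)_\Q$. The assumption that $(\underline{l}\cdot D)$ is divisible by $2d=(\underline{l}^2)$ means that the projection of $\NS(X)$ to $\Q\underline{l}$ is just $\Z\underline{l}$, so $c\in\Z$. Moreover $D'^2\leq 0$ by the Hodge index theorem. Walls for $\cali_o$ are nested semicircles determined by the numerical classes $(r,c,\chi)$. A destabilizing object should be semihomogeneous, hence satisfy $\Delta=0$, and the Bogomolov-type inequality then forces $c^2\cdot 2d-2r\chi\geq 0$. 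The wall corresponding to a semihomogeneous bundle with $(\ch_1)^2=2r\chi$ has endpoints expressible in terms of $c/r$. Writing $x=2dc$-type quantities, the condition that the numerical class is realized and that the wall passes through the point of $\cali_o$ reduces to Pell's equation $x^2-4dy^2=1$. Here $r,c$ are related to $x\pm1$ and $y$, and the wall's intersection with the $\beta$-axis becomes $\frac{2y}{x-1}$. Nesting of walls shows that only the largest walls matter. Among the Pell solutions these are given by $(x_0,y_0)$ and $(x_1,y_1)$; larger solutions give walls contained inside these and do not affect the threshold. Since $d$ is not a perfect square, Pell solutions exist, and there is no degenerate rational wall of the form $c/r=\sqrt{d}$-type. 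This produces the two candidates, with $\frac{2y_0}{x_0-1}$ the larger one.

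\textbf{Step 3: realizability, which is the main obstacle.} One must show that the candidate wall is actually realized, that is, that a semihomogeneous bundle with the required Chern character exists and maps nontrivially to $\cali_o$, or show that otherwise the next wall takes over. I would construct it via Mukai's theory: a simple semihomogeneous bundle $E$ exists for each primitive class with $\Delta=0$, and $\chi(E^\vee\otimes \cali_o)$ (computed via Riemann--Roch) shows $\Hom(E,\cali_o)\neq0$ for a suitable twist. The hard part is checking that this map is injective in the tilted heart, and deciding which of the two walls is actual. This depends on the finer structure of $\NS(X)$, specifically the $D'$ part, which explains why the statement only pins $\beta$ to a two-element set. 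The final inequality $\beta(X,\underline{l})\leq\frac{2y_0}{x_0-1}$ then follows by checking $\frac{2y_1}{x_1-1}<\frac{2y_0}{x_0-1}$. This holds because $\frac{2y}{x-1}=\frac{x+1}{2dy}$ decreases toward $1/\sqrt d$ as the solutions grow.
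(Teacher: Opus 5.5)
The paper does not prove this statement. It quotes it from Rojas and then sharpens it in Proposition \ref{prop_Rojas}, combining Rojas's upper bound with the Alvarado--Pareschi lower bound. Judged on its own, your sketch has a genuine gap exactly where the two-element set should come from.

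Nesting of walls only tells you that the largest wall of $\cali_o$ that is \emph{actually realized} governs $\beta(X,\ul)$. Your sentence ``larger solutions give walls contained inside these and do not affect the threshold'' presupposes that the wall of $(x_0,y_0)$ or of $(x_1,y_1)$ is realized. Nothing you say excludes two other possibilities: that the governing wall belongs to $(x_k,y_k)$ for some $k\geq 2$ (endpoints $-\frac{2y_k}{x_k-1}$, $-\frac{2y_k}{x_k+1}$), or that there is no wall at all and $\beta(X,\ul)=1/\sqrt d$. The lower bound $\beta(X,\ul)\geq\frac{2y_1}{x_1-1}$ is the real content here, and your Step 3 leaves it open.

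The tool you suggest in Step 3 also fails. The subobject you need is $S=E_{-\frac{2y}{x+1}\ul}$ with $v(S)=(\frac{x+1}{2},-y\ul,\frac{x-1}{2})$, and $\chi(S,\cali_o)=-1$, so Riemann--Roch gives no morphism $S\to\cali_o$. The other bundle $E=E_{-\frac{2y}{x-1}\ul}$ does have $\chi(E,\cali_o)=1$. But $\mu_{\ul}(E)$ is the left endpoint of the wall, so $E\in\calf_\beta$ along it, and a map $E\to\cali_o$ is never a destabilizing subobject.

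Your explanation of the ambiguity through the $D'$-part of $\NS(X)$ is also off. For a Pell wall, the Hodge index theorem forces $\ch_1$ to be proportional to $\ul$, as in the proof of Proposition \ref{prop_resolution_Rojas}. Moreover, Proposition \ref{prop_Rojas} shows the first value always occurs, so the two-element set is an artifact of Rojas's method, not a dependence on $\NS(X)$.

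Even the ``in particular'' bound is not established. It rests on ``a destabilizing object should be semihomogeneous'', for which you give no reason; a priori the subobject and quotient along a wall of $\cali_o$ are arbitrary objects of $\Coh^\beta(X)$. Write $\ch_0(S)=r$ and $\ul\cdot\ch_1(S)=2dc$. What must be shown is the following.
\begin{enumerate}
\item At the point $-\beta$ where $h^1_{\cali_o,\ul}$ starts to vanish, continuity of cohomological rank functions forces $\chi_{Q,\ul}(-\beta)=0$ for the HN factor $Q$ contributing to $h^1$.
\item If this point is an endpoint of the wall, alignment of $Z_{0,\beta}(Q)$ with $Z_{0,\beta}(\cali_o)$, where $\chi_{\cali_o,\ul}(-\beta)=d\beta^2-1\neq 0$, forces $\beta=\mu_{\ul}(Q)$. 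Hence $(\ul\cdot\ch_1(Q))^2=2(\ul^2)\ch_0(Q)\ch_2(Q)$, and similarly for $S$ at the other endpoint.
\item Only then does $v(S)+v(Q)=(1,0,-1)$ give $\ch_2(S)=r-1$ and $dc^2=r(r-1)$, i.e.\ $(2r-1)^2-4dc^2=1$, with $c\in\Z$ by the divisibility hypothesis.
\end{enumerate}

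You must also exclude that the threshold is a root of $\chi_{Q,\ul}$ in the interior of a chamber. This is exactly what happens for $\beta(X,\un)$ in Proposition \ref{prop_h^i}, and it is how irrational values arise. Finally, you must control actual walls larger than the $(x_0,y_0)$-wall that are not of Pell type. This is where $2d\mid(\ul\cdot D)$ is really used: for instance, it kills the walls from $\calo_X(-C)\to\cali_o$, with $C$ an elliptic curve through $o$, which appear as soon as $(\ul\cdot C)<\sqrt d$.

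Only your final step, the monotonicity of $\frac{2y}{x-1}=\frac{x+1}{2dy}$ along Pell solutions, is fine as written.
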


\begin{rem}
\begin{enumerate}
\item If $d$ is a perfect square, then $\beta(X,\underline{l}) =\frac{1}{\sqrt{d}}$ by \cite[Theorem A (1)]{MR4363833}.
\item The condition that $2d \mid (\underline{l} \cdot D) $ for every divisor $D $ holds when the Picard number of $X$ is one.
\end{enumerate}
\end{rem}

The main result of this paper gives the exact value of $\beta(X,\underline{l})$ in this setting.
More generally, we determine $\beta(X,\underline{n})$ for a polarization $\underline{n}$ that is sufficiently close to being proportional to $\underline{l}$ as follows.

\begin{thm}\label{thm_BFT_surface}
Under the assumptions of Theorem \ref{thm_Rojas}, 
let $\underline{n}$ be a polarization on $X$, possibly equal to $\underline{l}$.
If $x_0^2 (\underline{n}^2) \geq 2y_0^2 (\underline{l} \cdot \underline{n})^2$, then
\begin{align*}
\beta(X,\underline{n}) =  \frac{2y_0}{x_0-1} \cdot  \frac{(\underline{l} \cdot \underline{n} ) - \sqrt{(\underline{l} \cdot \underline{n})^2 - (\underline{l}^2) (\underline{n}^2)}}{(\underline{n}^2)} .
\end{align*}
In particular, 
$\beta(X,\underline{l}) =\frac{2y_0}{x_0-1}$.
\end{thm}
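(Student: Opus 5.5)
The formula has two sides, so I will prove an upper bound and a lower bound separately. I will use two standard features of $\beta$. First, $\beta(X,\underline{n}) \le t$ for rational $t=a/b$ holds exactly when the $\Q$-twisted sheaf $\cali_o\langle t\underline{n}\rangle$ is IT(0). Equivalently, for the isogeny $\mu_b$ it holds when $\mu_b^*\cali_o\otimes N^{a b}$ is IT(0). Second, $\beta$ is homogeneous, and it is bounded below through any IT(0)/GV test against a sheaf $E$ via the cohomology of $\cali_o \otimes E^\vee$.

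The tool for both directions is a simple semihomogeneous vector bundle $E$ on $X$. Its slope $\det E/\rank E$ is the $\Q$-class $\frac{x_0-1}{2y_0}\,\underline{l}$, or a related class coming from the Pell solution $(x_0,y_0)$. Existence follows from Mukai's theory. Since $\NS(X)$ is generated by $\underline{l}$ up to the divisibility hypothesis, the class $\ch(E)=(r, c, s)$ should satisfy $c^2=2rs$, i.e.\ $v(E)^2=0$. Pell's equation $x_0^2-4dy_0^2=1$ is exactly what produces an integral primitive Mukai vector of this form.

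For the upper bound I will show that $\cali_o\langle t\underline{n}\rangle$ is IT(0), where $t$ is the claimed value. Write $\underline{n}$ against $\underline{l}$. The quantity $\frac{(\underline{l}\cdot\underline{n})-\sqrt{(\underline{l}\cdot\underline{n})^2-(\underline{l}^2)(\underline{n}^2)}}{(\underline{n}^2)}$ is the smallest $s>0$ such that $\underline{l}-s\underline{n}$ lies on the boundary of the positive cone, since it is a root of $((\underline{l}-s\underline{n})^2)=0$. So the class $\frac{2y_0}{x_0-1}\,(\underline{l}-\text{boundary})$ compares $t\underline{n}$ with $\frac{2y_0}{x_0-1}\underline{l}$ up to a nef, non-ample difference.

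My plan is to reduce to the case $\underline{n}=\underline{l}$, namely $\beta(X,\underline{l})\le \frac{2y_0}{x_0-1}$ from Theorem \ref{thm_Rojas}. I will then use that IT(0) is preserved under tensoring by sufficiently positive semihomogeneous twists, or equivalently argue with a convexity and monotonicity statement of $\beta$ along the cone. If that reduction fails, the fallback is a Bridgeland wall analysis in the $(\underline{l},\underline{n})$-plane.

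For the lower bound, and in particular for the equality $\beta(X,\underline{l})=\frac{2y_0}{x_0-1}$, I must rule out the second value $\frac{2y_1}{x_1-1}$ from Theorem \ref{thm_Rojas}. I will produce a nonzero map that obstructs IT(0) slightly below the threshold. Concretely, I will find the semihomogeneous bundle $E$ with slope $\frac{x_0-1}{2y_0}\underline{l}$, adjusted by the isotropic class, together with a nonzero element of $H^1$ of $\cali_o\otimes E\otimes P_\alpha$, or of $\Hom(E,\cali_o\langle\cdot\rangle)$, for all $\alpha$. This comes from $\chi=0$ combined with $h^0\ne 0$, which holds because $E$ has sections vanishing at $o$ once $\rank E < h^0(E)$.

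The condition $x_0^2(\underline{n}^2)\ge 2y_0^2(\underline{l}\cdot\underline{n})^2$ enters when this obstruction is transported to $\underline{n}$. It guarantees that the relevant destabilizing bundle still computes the threshold for $\underline{n}$, i.e.\ the wall stays on the correct side. I expect this lower bound, and specifically the construction of the obstructing semihomogeneous sheaf together with the verification that the inequality is exactly what is needed, to be the main obstacle.
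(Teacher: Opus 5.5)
\textbf{Main gap: the upper bound.} Your reduction to $\underline{n}=\underline{l}$ runs in the wrong direction.

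First a small point. IT(0) of $\cali_o\langle x\underline{n}\rangle$ characterizes $x>\beta(X,\underline{n})$, not $x\ge\beta$. Moreover the claimed value $t$ is typically irrational (Corollary \ref{cor_irrational_Intro}). So the target must be IT(0) for all rational $x>t$, not IT(0) at $t$ itself.

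The serious problem is monotonicity. Put $\delta=\sqrt{(\underline{l}\cdot\underline{n})^2-(\underline{l}^2)(\underline{n}^2)}$ and $s_\pm=((\underline{l}\cdot\underline{n})\pm\delta)/(\underline{n}^2)$, so that $t=\frac{2y_0}{x_0-1}s_-$.
\begin{itemize}
\item Since $\underline{l}-s_-\underline{n}$ is nef, $t\underline{n}$ is \emph{less} positive than $\frac{2y_0}{x_0-1}\underline{l}$. IT(0) just above $t\underline{n}$ is therefore stronger than anything Theorem \ref{thm_Rojas} provides.
\item Tensoring with nef twists only gives IT(0) at $x\underline{n}$ when $x\underline{n}-c\,\underline{l}$ is nef for some $c>\frac{2y_0}{x_0-1}$. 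This yields $\beta(X,\underline{n})\le\frac{2y_0}{x_0-1}s_+$, which is strictly weaker unless $\underline{n}\in\Q\underline{l}$.
\item The natural convexity statement, superadditivity of $\beta^{-1}$, is false (see the Remark after the proof of Theorem \ref{thm_BFT_surface}).
\item Your Bridgeland fallback is not specified.
\end{itemize}

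The missing idea is the resolution $0\to E\to F\to\cali_o\to 0$, with $E=E_{-\frac{2y_0}{x_0-1}\underline{l}}$ and $F=E_{-\frac{2y_0}{x_0+1}\underline{l}}$. It comes from Rojas' wall analysis for $\underline{l}$, once one knows the destabilizing wall is the one attached to $(x_0,y_0)$; this is exactly where $\beta(X,\underline{l})=\frac{2y_0}{x_0-1}$ is used. The long exact sequence then gives $h^1_{\cali_o,\underline{n}}(x)\le h^1_{F,\underline{n}}(x)+h^2_{E,\underline{n}}(x)$. By Lemma \ref{lem_h^i_semihomogeneous}, the first term vanishes for $x>s_+(F,\underline{n})$ and the second already for $x>s_-(E,\underline{n})=t$, at the \emph{smaller} root, even though $E\langle x\underline{n}\rangle$ still has $h^1$ there. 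No nef-monotonicity argument can see this. The hypothesis $x_0^2(\underline{n}^2)\ge 2y_0^2(\underline{l}\cdot\underline{n})^2$ is precisely $s_+(F,\underline{n})\le s_-(E,\underline{n})$. So it is needed for the upper bound, not for transporting the lower bound as you suggest.

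\textbf{Lower bound: the stated mechanism fails.}
\begin{itemize}
\item For semihomogeneous $G$ with slope $c\,\underline{l}$, one has $\chi(\cali_o\otimes G)=\rank G\,(dc^2-1)\neq 0$ because $d$ is not a square. So ``$\chi=0$ plus $h^0\neq 0$'' is unavailable, and $h^0\neq0$ would not force $h^1\neq0$ anyway.
\item The slope $\frac{x_0-1}{2y_0}\underline{l}$ is not the relevant one.
\item $\NS(X)$ need not be $\Z\underline{l}$; it has rank two in Corollary \ref{cor_irrational_Intro}.
\end{itemize}
The paper gets $\beta(X,\underline{l})\ge\frac{2y_0}{x_0-1}$ from Alvarado--Pareschi with $\nu=\frac{2y_0}{x_0-1}$. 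For $\underline{n}$, it reads $h^1_{\cali_o,\underline{n}}(x)\ge h^2_{E,\underline{n}}(x)>0$ off the resolution for $x$ just below $t$.

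Your test idea can be repaired, as follows.
\begin{itemize}
\item Take $G=E_{\frac{2y_0}{x_0-1}\underline{l}}$, with $v(G)=(\frac{x_0-1}{2},y_0\underline{l},\frac{x_0+1}{2})$, and rational $x<t$. Then $G\langle -x\underline{n}\rangle$ is IT(0), since $\frac{2y_0}{x_0-1}\underline{l}-x\underline{n}$ is ample.
\item So IT(0) of $\cali_o\langle x\underline{n}\rangle$ would make $\cali_o\otimes G$ IT(0).
\item By semihomogeneity, every $G\otimes P_\alpha$ would then be globally generated by its $\frac{x_0+1}{2}$ sections. The kernel would be a line bundle with Chern character $(1,-y_0\underline{l},-\frac{x_0+1}{2})$.
\item This is impossible, since such a line bundle has $\ch_2=dy_0^2>0$.
\end{itemize}
Note that the obstruction $H^1(\cali_o\otimes G\otimes P_\alpha)\neq0$ holds only for special $\alpha$, not for all $\alpha$ as you claim. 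For $d=2$, $G$ is a line bundle of type $(1,2)$ whose pencil has finitely many base points. One $\alpha$ suffices to break IT(0). This repaired argument gives the lower bound for every $\underline{n}$, but it gives no upper bound.
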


The proof is outlined as follows.
First, by combining the lower bound for $\beta(X, \underline{l})$ provided by Alvarado--Pareschi \cite{Alvarado:2024aa} with Theorem \ref{thm_Rojas},
we obtain the equality $\beta(X, \underline{l}) = \frac{2y_0}{x_0-1}$.
Following the arguments in \cite{MR4579982} and \cite{MR4363833}, 
this equality induces a resolution 
\[
0 \to E \to F \to \cali_o \to 0
\]
of the ideal sheaf $\cali_o$ of the origin $o \in X$ by semihomogeneous vector bundles,
whose Chern characters can be computed explicitly.
We show that this resolution determines the exact value of $\beta(X, \underline{n})$
if the numerical condition $x_0^2 (\underline{n}^2) \geq 2y_0^2 (\underline{l} \cdot \underline{n})^2$ is satisfied.
It should be noted that the lower bound in \cite{Alvarado:2024aa} also essentially relies on the properties of semihomogeneous bundles.

\vspace{2mm}

Theorem \ref{thm_BFT_surface} allows us to construct polarized abelian surfaces with irrational basepoint-freeness thresholds. For instance, we obtain the following corollary.

\begin{cor}\label{cor_irrational_Intro}
There exists a polarized abelian surface $(X,\underline{n})$ of type $(1,17)$ such that $\beta(X,\underline{n}) = \frac{6-\sqrt{2}}{17}$. 
\end{cor}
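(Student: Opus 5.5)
The plan is to apply Theorem \ref{thm_BFT_surface} with $d=2$, which is not a perfect square, on an abelian surface of Picard number two. For $d=2$, Pell's equation is $x^2-8y^2=1$, with minimal positive solution $(x_0,y_0)=(3,1)$. Hence $\frac{2y_0}{x_0-1}=1$.

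We look for a polarization $\underline{n}$ of type $(1,17)$, so $(\underline{n}^2)=34$, with $(\underline{l}\cdot\underline{n})=12$, where $(\underline{l}^2)=4$. Then
\[
(\underline{l}\cdot\underline{n})^2-(\underline{l}^2)(\underline{n}^2)=144-136=8 .
\]
The formula of Theorem \ref{thm_BFT_surface} gives
\[
\beta(X,\underline{n})=\frac{12-2\sqrt{2}}{34}=\frac{6-\sqrt2}{17}.
\]
The numerical hypothesis also holds, since $x_0^2(\underline{n}^2)=9\cdot 34=306\geq 288=2y_0^2(\underline{l}\cdot\underline{n})^2$.

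It remains to construct $(X,\underline{l},\underline{n})$. Let $\Lambda=\Z l\oplus\Z n$ be the even lattice with Gram matrix $\begin{pmatrix}4&12\\12&34\end{pmatrix}$. Its determinant is $-8$, so it is even of signature $(1,1)$.

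\begin{itemize}
\item \emph{Embedding.} An even lattice of signature $(1,1)$ embeds primitively into $H^2(X,\Z)\cong U^{\oplus 3}$ by Nikulin's embedding theorem; here the rank plus the length of the discriminant group is well within the bound.
\item \emph{Realization.} By surjectivity of the period map for complex tori, together with the Lefschetz $(1,1)$ theorem and the Riemann relations, there is an abelian surface $X$ with $\NS(X)\cong\Lambda$. For a very general period orthogonal to $\Lambda$ with positive-definite part, $\NS(X)$ is exactly $\Lambda$.
\item \emph{Ampleness.} Abelian surfaces contain no curves of negative self-intersection, so the ample cone is a full component of the positive cone. Both $l$ and $n$ have positive square and $l\cdot n>0$, so they lie in the same component. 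After replacing both by their negatives if necessary, both are ample.
\item \emph{Types.} Both classes are primitive in $\Lambda=\NS(X)$. Since $(l^2)=4$ and $(n^2)=34$, they have types $(1,2)$ and $(1,17)$ respectively.
\end{itemize}

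The divisibility hypothesis of Theorem \ref{thm_Rojas} is immediate: every $D\in\NS(X)$ has the form $D=al+bn$, and then $(\underline{l}\cdot D)=4a+12b$ is divisible by $2d=4$. So all hypotheses of Theorem \ref{thm_BFT_surface} hold, and $(X,\underline{n})$ is the required polarized abelian surface of type $(1,17)$.

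The only step needing care is the lattice-theoretic existence of $X$ with $\NS(X)$ exactly $\Lambda$. This is standard, but I would cite it precisely, for instance via the Torelli-type surjectivity for abelian surfaces.
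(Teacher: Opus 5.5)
Your proposal is correct and follows essentially the same proof as the paper. You use the same numerical data $(\underline{l}^2,\underline{n}^2,\underline{l}\cdot\underline{n})=(4,34,12)$ and the same application of the main theorem with $(x_0,y_0)=(3,1)$, and you realize the lattice via surjectivity of the period map together with the same ampleness argument. The only cosmetic difference is the primitive embedding: you get it from Nikulin's theorem, while the paper writes it down explicitly as $\Z(e_1+e_2)\oplus\Z(e_3-2e_4)\subset U^{\oplus 3}$, with $\underline{l}=2\underline{h}_1+\underline{h}_2$ and $\underline{n}=5\underline{h}_1+2\underline{h}_2$, and you additionally spell out the divisibility check $4\mid(\underline{l}\cdot D)$, which the paper leaves implicit.
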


We comment on  Theorem \ref{thm_BFT_surface} and Corollary \ref{cor_irrational_Intro} in relation to the Seshadri constant $\ep(X,\underline{l})$,
another invariant measuring the positivity of the polarization $\underline{l}$.
In \cite[\S 3]{MR4518246},
we observe some similarities between $\beta(X,\underline{l})$ and $\ep(X,\underline{l})$.
A similar phenomenon appears in Theorem \ref{thm_BFT_surface},
as Bauer \cite[Theorem 6.1]{MR1678549} determines $\ep(X,\underline{l})$ using the minimal solution to Pell's equation $x^2-2d y^2=1$ for abelian surfaces with Picard number one.

Seshadri constants are defined for any polarized variety, and it is a famous open problem whether they can be irrational.
In the case of abelian surfaces,
Bauer and Szemberg  \cite[Corollary A.2]{MR1660259} show that $\ep(X,\underline{l})$ is always rational. 
On the other hand, Caucci suggested that basepoint-freeness thresholds of polarized abelian varieties  could be irrational,
 even though no such example was known at the time \cite[Remark 2.3]{MR4114062}.
Following Theorem \ref{thm_Rojas}, Rojas raised the question of whether 
$\beta(X,\underline{l})$  is rational for every polarized abelian surface \cite[Remark 4.2]{MR4363833}.
Corollary \ref{cor_irrational_Intro} provides the first example of a polarized abelian surface with irrational basepoint-freeness threshold.

After completing the draft of this paper, the author was informed through private communication that Chunyi Li, Lei Song, and Xiao Wang had also independently discovered the example in Corollary \ref{cor_irrational_Intro}.

\vspace{2mm}
This paper is organized as follows.
In \S \ref{sec_preliminaries}, we recall some notation on cohomological rank functions,  basepoint-freeness thresholds, and semihomogeneous bundles.
In \S \ref{sec_beta=}, we determine $\beta(X,\underline{l})$ under the assumption of Theorem \ref{thm_Rojas}.
In \S \ref{sec_proof} and \S \ref{sec_proof_cor}, we give proofs of Theorem \ref{thm_BFT_surface} and Corollary \ref{cor_irrational_Intro}, respectively.

\subsection*{Acknowledgments}
The author would like to thank Professors  Mart\'i Lahoz and Andr\'es Rojas for their useful comments and suggestions.
The author was supported by JSPS KAKENHI Grant Numbers 21K03201, 26K06719.

\section{Preliminaries}\label{sec_preliminaries}

\subsection{Cohomological rank functions and basepoint-freeness thresholds}\label{subsec_BFT}

Let $(X,\underline{l})$ be a polarized abelian variety and let $F$ be a coherent sheaf on $X$.
In \cite{MR4157109}, Jiang and Pareschi introduce the \emph{cohomological rank function} $ h^i_{F,\underline{l}}  \colon \Q \to \Q_{\geq 0}$ as
\[
h^i_{F,\underline{l}} (x) \coloneqq b^{-2g} h^i(X, \mu_b^* F \otimes L^{ab} \otimes P_\alpha) 
\]
for $x=\frac{a}{b}$,
where $ \mu_b   \colon  X \to X$ is the multiplication-by-$b$ isogeny, $L$ is an ample line bundle representing $\underline{l}$,
and  $P_\alpha$ is the line bundle on $X$ corresponding to  a general point $\alpha$ of the dual abelian variety $  \widehat{X}=\Pic^0(X)$.

\begin{defn}[{\cite[Section 8]{MR4157109}}]\label{def_Bpf_threshold}
Let $(X,\underline{l})$ be a polarized abelian variety.
The \emph{basepoint-freeness threshold} $\beta(X,\underline{l})$ is defined by
\[
\beta(X,\underline{l}) \coloneqq \inf \{ x \in \Q \, | \,  h^1_{\cali_o,\underline{l}}(x) =0 \},
\]
where $\cali_o$ is the ideal sheaf corresponding to the origin $o \in X$.
\end{defn}

In \cite[\S 3.1]{Alvarado:2024aa}, Alvarado and Pareschi introduce a related invariant 
\[
\beta^0(X,\underline{l})\coloneqq \sup \{ x \in \Q \, | \,  h^0_{\cali_o,\underline{l}}(x) =0 \}.
\]
We note that $\beta(X,\underline{l})$ and $\beta^0(X,\underline{l})$ are denoted by $\beta^1_X(\underline{l})$ and $\beta^0_X(\underline{l})$ respectively in \cite{Alvarado:2024aa}.

\subsection{Semihomogeneous bundles}

A vector bundle $E$ on an abelian variety $X$ is called \emph{semihomogeneous} if 
for each $x \in X$, there exists $\alpha \in \widehat{X}$ such that $t_x^* E \simeq E \otimes P_\alpha$,
where $t_x   \colon  X \to X$ is the translation by $x$.
We refer the reader to  \cite{MR498572} and \cite[\S 1.5]{Alvarado:2024aa} for details.

\begin{enumerate}
\item \cite[Proposition 6.17, Theorem 7.11 (1)]{MR498572} 
For $\lambda \in \Q$ and a class $\underline{l} \in \NS(X) $ in the N\'eron--Severi group,
there exists a simple semihomogeneous bundle $E_{\lambda \underline{l} } $ with $c_1(E_{\lambda \underline{l} } )/\rank E_{\lambda \underline{l} }  = \lambda \underline{l}  \in \NS(X)_\Q$.
Such $E_{\lambda \underline{l} } $ is unique up to tensoring $P_\alpha$ for $\alpha \in \widehat{X}$.
\item \cite[Theorem 7.11 (5)]{MR498572}
For $\lambda=\frac{a}b \in \Q$ with $b >0$,
let $X[b]$ denote the group of $b$-torsion points on $X$ and 
set
\[
 u_{\underline{l}}(a,b) =\sqrt{ \left|X[b] \cap K(a \underline{l}) \right| }, 
\]
where $ K(a \underline{l})$ is the kernel of $\varphi_{a \underline{l} }   \colon  X \to \widehat{X}$ induced by the class $a \underline{l}$.
Then
\begin{align}\label{eq_rank,chi}
\rank E_{\lambda \underline{l}} =  r_{\underline{l}}(\lambda) \coloneqq  \frac{b^g}{u_{\underline{l}}(a,b) }, 
\quad \chi(E_{\lambda \underline{l}}) = \frac{a^g \chi(\underline{l})}{u_{\underline{l}}(a,b) } = r_{\underline{l}}(\lambda) \frac{(\lambda \underline{l})^g}{g!},
\end{align}
where $g=\dim X$.
\item  \cite[Proposition 6.16]{MR498572} For any $\lambda$ and $ \underline{l}$,
$E_{\lambda \underline{l}}$ is Gieseker stable with respect to any ample line bundle $H$.
If $\dim X=2$, $E_{\lambda \underline{l}}$ is $\mu_H$-stable
since the discriminant $\Delta(E_{\lambda \underline{l}}) = 2 (\rank E_{\lambda \underline{l}}) c_2(E_{\lambda \underline{l}}) - (\rank E_{\lambda \underline{l}} -1) c_1(E_{\lambda \underline{l}})^2 = \ch_1(E_{\lambda \underline{l}})^2 -2 \ch_0( E_{\lambda \underline{l}})  \cdot \ch_2(E_{\lambda \underline{l}} )  $ is zero by (2).
\end{enumerate}

For a coherent sheaf $F$ on $X$, set
\[
v(F) \coloneqq  (\ch_0(F),\ch_1(F),\ch_2(F)) = (\rank F, c_1(F), \chi(F)) \in \Z \oplus \NS (X) \oplus \Z.
\]

\begin{lem}\label{lem_rank_chi}
Let $(X,\underline{l})$ be a polarized abelian surface of type $(1,d)$.
Assume that $d$ is not a perfect square, and let $(x_0,y_0)$ be  the minimal positive solution to  Pell's equation $x^2-4d y^2=1$.
Then
\begin{align*}
v\Big(E_{\frac{2y_0}{x_0\pm1} \underline{l}} \Big) = \left( \frac{x_0\pm 1}{2} , \,  y_0 \underline{l}, \, \frac{x_0\mp 1}{2}   \right).
\end{align*}
\end{lem}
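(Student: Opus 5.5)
Write $\lambda_\pm = \frac{2y_0}{x_0\pm 1}$, $a = 2y_0$, $b = x_0 \pm 1$. The plan is to compute $\rank$ and $\chi$ from \eqref{eq_rank,chi} with $g=2$. This requires computing $u_{\underline{l}}(a,b)$. We then recover $c_1$ from $c_1 = \rank\cdot \lambda_\pm \underline{l}$.

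\textbf{Step 1: reduce to lowest terms.} Since $x_0^2 - 1 = 4dy_0^2$ with $x_0$ odd, we have $(x_0-1)(x_0+1) = 4dy_0^2$, and the two factors $x_0\pm1$ are even with $\gcd(x_0-1,x_0+1)=2$. Writing $p=\frac{x_0-1}{2}$ and $q=\frac{x_0+1}{2}$, which are coprime consecutive integers, we get $pq = dy_0^2$. Hence $\lambda_+ = y_0/q$ and $\lambda_- = y_0/p$. Since $E_{\lambda\underline{l}}$ depends only on $\lambda$, we may use the representation $a=y_0$ with $b=q$ (resp.\ $b=p$), and these need not be in lowest terms.

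\textbf{Step 2: compute $u$.} For $\underline{l}$ of type $(1,d)$, $K(a\underline{l}) \cong (\Z/a)^2 \times (\Z/ad)^2$. Therefore
\[
|X[b]\cap K(a\underline{l})| = \gcd(a,b)^2\gcd(ad,b)^2,
\]
and so $u = \gcd(y_0,b)\gcd(dy_0,b)$.

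\textbf{Step 3: the key arithmetic identity.} We claim that $\gcd(y_0,q)\gcd(dy_0,q) = q$ and similarly for $p$. Since $q \mid dy_0^2$, we get $\gcd(dy_0,q)\cdot\gcd(y_0,q) \geq q$ by multiplicativity considerations, prime by prime. For a prime $\ell$, write $v_\ell(q)=e$, $v_\ell(y_0)=s$, $v_\ell(d)=t$, so that $e \le t+2s$.

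The exponent of $\ell$ in $u$ is $\min(s,e)+\min(s+t,e)$. This equals $e$ exactly when it does not exceed $e$. This is where minimality of the Pell solution must enter. We need to rule out $\ell$-adic excess, i.e.\ the case $e < s$ (or $e<s+t$ with $e>s$ giving a sum exceeding $e$).

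I expect this to be the main obstacle. The approach I would try uses the fact that $(x_0,y_0)$ is minimal. If $g=\gcd$-excess occurs, one can factor $p = d_1 m^2$ and $q = d_2 n^2$ with $d_1d_2=d$ and $y_0 = mn$; this holds since $p,q$ are coprime with product $dy_0^2$. Then $2y_0 = 2mn$ and $q-p=1$ gives $d_2n^2 - d_1m^2 = 1$.

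The exponent computation then becomes explicit. We have $\gcd(y_0,q) = n\gcd(m,d_2n^2)=n$, using $\gcd(m,q)=1$ because $m^2\mid p$. We also have $\gcd(dy_0,q) = \gcd(d_1d_2mn, d_2n^2) = d_2 n \gcd(d_1 m, n)$, and $\gcd(d_1m,n)=1$ since $d_1m^2 = p$ is coprime to $q$. Thus $u = d_2n^2 = q$. Symmetrically, $u=p$ for $b=p$. This decomposition argument actually avoids needing minimality, which is only used to fix which solution is meant.

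\textbf{Step 4: conclude.} By Step 3, $\rank E_{\lambda_+\underline{l}} = q^2/q = q = \frac{x_0+1}{2}$. Also $c_1 = q\cdot (y_0/q)\underline{l} = y_0\underline{l}$. Finally,
\[
\chi = \frac{a^2\chi(\underline{l})}{u} = \frac{y_0^2 d}{q} = p = \frac{x_0-1}{2},
\]
using $\chi(\underline{l})=d$. The case of $\lambda_-$ is symmetric, giving $v = (p, y_0\underline{l}, q)$, as claimed.
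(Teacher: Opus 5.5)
Your proposal is correct and follows the paper's route. You use the same non-reduced representation $\lambda_\pm=y_0/b$ with $b=\frac{x_0\pm1}{2}$, the same count $u=\gcd(b,y_0)\gcd(b,dy_0)$, and the same key input, namely that $\frac{x_0-1}{2}$ and $\frac{x_0+1}{2}$ are coprime with product $dy_0^2$; your factorization $p=d_1m^2$, $q=d_2n^2$, $y_0=mn$ just repackages the prime-by-prime valuation count of Claim~\ref{claim_GCD}, and minimality indeed plays no role. One harmless slip: $\gcd(mn,d_2n^2)=n\gcd(m,d_2n)$ rather than $n\gcd(m,d_2n^2)$, but both equal $n$ here because $\gcd(m,q)=1$.
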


\begin{proof}
It follows from the Pell equation that $x_0$ is odd and
\begin{align}\label{eq_pell}
\frac{x_0-1}2 \cdot \frac{x_0+1}2=dy_0^2.
\end{align}

\begin{claim}\label{claim_GCD}
Set $A_\pm=\gcd (\frac{x_0\pm1}2,y_0)$ and  $B_\pm=\gcd (\frac{x_0\pm1}2,d y_0)$, where $\gcd (a,b)$ denotes the greatest common divisor of $a$ and $b$.
For a prime number $p$, let $v_p$ denote the $p$-adic valuation.
Then
$v_p(A_\pm) +v_p(B_\pm)=v_p(\frac{x_0\pm1}{2})$.
\end{claim}

\begin{proof}
We only prove this claim for  $A_+,B_+ $.
Since this claim is trivial when $p \nmid \frac{x_0+1}2$, we  may assume $p \mid \frac{x_0+1}2$.
Then $ p \nmid \frac{x_0-1}2=\frac{x_0+1}2-1$ and hence
\begin{align}\label{eq_pell2}
v_p \left(\frac{x_0+1}2 \right) =v_p \left(\frac{x_0-1}2 \cdot \frac{x_0+1}2 \right) =v_p(dy_0^2) = v_p(y_0) + v_p(dy_0)
\end{align}
by \ref{eq_pell}.
In particular, both $ v_p(y_0)$ and $ v_p(dy_0)$ are at most $v_p(\frac{x_0+1}2) $. 
Hence  $v_p(A_+)= v_p(y_0), v_p(B_+)=v_p(dy_0)$, and thus this claim follows from \ref{eq_pell2}.
\end{proof}

Recall that 
\[
 u_{\underline{l}}\left(y_0,\frac{x_0\pm 1}2\right) =\sqrt{ \left|X\left[\frac{x_0\pm 1}2\right] \cap K(y_0 \underline{l}) \right| }.
\]
Since 
$K(y_0 \underline{l})$ is isomorphic to $(\Z/ y_0\Z \oplus \Z/dy_0\Z)^{\oplus 2}$,  we have
\begin{align*}
u_{\underline{l}}\left(y_0,\frac{x_0\pm 1}2\right) &=\left| \left\{ z \in  \Z/ y_0\Z \oplus \Z/dy_0\Z \  \Big| \ \frac{x_0\pm 1}2 \cdot z=0  \right\} \right| \\
&= \gcd \left(\frac{x_0\pm 1}2, y_0\right) \cdot \gcd \left(\frac{x_0\pm 1}2,d y_0\right) =\frac{x_0\pm 1}2,
\end{align*}
where the last equality holds by Claim \ref{claim_GCD}.
Then this lemma holds by \ref{eq_rank,chi} and $c_1( E_{\lambda \underline{l}}) =\rank  E_{\lambda \underline{l}} \cdot  \lambda \underline{l}$.
\end{proof}

\section{Computation of $\beta(X,\underline{l})$ in Theorem \ref{thm_Rojas}}\label{sec_beta=}

Using a result in \cite{Alvarado:2024aa}, we determine the exact value of $\beta(X,\underline{l}) $ in Theorem \ref{thm_Rojas} as follows.

\begin{prop}\label{prop_Rojas}
In the setting of Theorem \ref{thm_Rojas}, $\beta(X,\underline{l}) =\frac{2y_0}{x_0-1}$ holds.
\end{prop}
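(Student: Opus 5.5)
By Theorem \ref{thm_Rojas} we already know $\beta(X,\underline{l}) \in \{ \frac{2y_0}{x_0-1}, \frac{2y_1}{x_1-1}\}$ and $\beta(X,\underline{l}) \leq \frac{2y_0}{x_0-1}$. It therefore suffices to prove the lower bound $\beta(X,\underline{l}) > \frac{2y_1}{x_1-1}$, or directly $\beta(X,\underline{l}) \geq \frac{2y_0}{x_0-1}$. Since $\frac{2y}{x-1}$ decreases along the Pell solutions, this lower bound excludes the second value. We will get it from the lower bound of Alvarado--Pareschi \cite{Alvarado:2024aa}, which is produced by semihomogeneous bundles.

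The input we expect to use from \cite{Alvarado:2024aa} is the following. If $E=E_{\lambda\underline{l}}$ is a simple semihomogeneous bundle with $\lambda>0$ and $\chi(E)\neq 0$, and $\Hom(E, \cali_o\otimes P_\alpha)$ or a suitable $\Ext$ group is forced to vanish generically, then one gets a bound of the form $\beta(X,\underline{l}) \geq \lambda$ (or the dual statement $\beta^0(X,\underline{l}) \leq \lambda$), together with the relation between $\beta$ and $\beta^0$ on abelian surfaces. A concrete form is $\beta(X,\underline{l}) \geq \sup\{\lambda : h^0(E_{\lambda\underline{l}})<\rank E_{\lambda\underline{l}}\}$, roughly. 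We apply this to $E_{\frac{2y_0}{x_0+1}\underline{l}}$ and $E_{\frac{2y_0}{x_0-1}\underline{l}}$, whose invariants Lemma \ref{lem_rank_chi} gives as
\[
v\Big(E_{\frac{2y_0}{x_0\pm1}\underline{l}}\Big) = \Big(\tfrac{x_0\pm1}{2},\, y_0\underline{l},\, \tfrac{x_0\mp1}{2}\Big).
\]
For $E_{\frac{2y_0}{x_0-1}\underline{l}}$, we have $\chi = \frac{x_0+1}{2} = \rank +1$. By IT(0), this is $h^0 = \rank + 1$, so there is a nonzero map $E^\vee$-type twisted into $\cali_o$. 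Equivalently, we have a morphism $F \to \cali_o$ with $F$ semihomogeneous of slope $\frac{2y_0}{x_0-1}$ and $\chi(F)-\rank F = 1$. Feeding this into the Alvarado--Pareschi criterion should yield $\beta(X,\underline{l}) \geq \frac{2y_0}{x_0-1}$, or at least strictly larger than $\frac{2y_1}{x_1-1}$.

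If the cited lower bound is phrased through $\beta^0$ instead, we will use the companion bundle $E_{\frac{2y_0}{x_0+1}\underline{l}}$, which has $\chi = \rank - 1$. We then need to show that any semihomogeneous $E$ with slope $\mu$ strictly between the relevant values gives $h^0_{\cali_o}$ or $h^1_{\cali_o}$ nonvanishing at $\mu$. This uses the formula $h^0_{\cali_o,\underline{l}}(x)-h^1_{\cali_o,\underline{l}}(x) = \chi_{\underline{l}}(x) - 1 = dx^2-1$. From it, $h^1_{\cali_o,\underline{l}}(x) \geq 1-dx^2 >0$ for $x<1/\sqrt d$, and the Pell values satisfy $\frac{2y_0}{x_0-1} > \frac{1}{\sqrt d} > \frac{2y_0}{x_0+1}$.

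The main obstacle is the precise matching of the Alvarado--Pareschi bound with these two Pell slopes. The number-theoretic minimality of $(x_0,y_0)$ must translate into the statement that no semihomogeneous bundle of slope in $(\frac{2y_1}{x_1-1}, \frac{2y_0}{x_0-1})$ can witness vanishing of $h^1$. We would first try to show that their lower bound equals $\min\{\lambda>1/\sqrt d : \exists\, E_{\lambda\underline{l}} \text{ with } \chi(E)=\rank E+1\}$. By Lemma \ref{lem_rank_chi} and the Pell equation, together with the divisibility assumption $2d\mid(\underline{l}\cdot D)$ that forces $c_1$ to be a multiple of $\underline{l}$, this minimum is exactly $\frac{2y_0}{x_0-1}$.
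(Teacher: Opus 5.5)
Your overall architecture matches the paper's: the upper bound $\beta(X,\underline{l})\le\frac{2y_0}{x_0-1}$ from Theorem \ref{thm_Rojas}, a matching lower bound from Alvarado--Pareschi, and Lemma \ref{lem_rank_chi} as the numerical input. \textbf{But the lower bound, which is the whole content of the proposition, is never obtained.} You do not state an inequality that yields $\beta(X,\underline{l})\ge\frac{2y_0}{x_0-1}$, and the criteria you do write down are too weak.

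\begin{itemize}
\item Your ``concrete form'' $\sup\{\lambda : h^0(E_{\lambda\underline{l}})<\rank E_{\lambda\underline{l}}\}$ equals $1/\sqrt d$, because $h^0(E_{\lambda\underline{l}})=\chi(E_{\lambda\underline{l}})=d\lambda^2\rank E_{\lambda\underline{l}}$. That is the trivial bound. It lies below $\frac{2y_1}{x_1-1}$, so it excludes nothing.
\item A nonzero map $E_{-\nu\underline{l}}\otimes P_\alpha\to\cali_o$ exists because $h^0=\rank+1$: a section vanishing at $o$. But it only shows $h^0_{\cali_o,\underline{l}}(\nu)>0$, i.e.\ an upper bound $\beta^0\le\nu$. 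It is not a lower bound on $\beta$.
\end{itemize}

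The sentence ``feeding this into the Alvarado--Pareschi criterion should yield'' is exactly where the proof is missing.

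What the paper uses is the explicit inequality from \cite[Theorem D and (1.9)]{Alvarado:2024aa}. It is applied to the dual polarization $\underline{\hat l}$, which is again of type $(1,d)$:
\[
\beta(X,\underline{l})\ \ge\ \frac{1+r_{\underline{\hat l}}(\nu)}{d\,\nu\, r_{\underline{\hat l}}(\nu)}\qquad\text{for rational } \nu>\tfrac{1}{\sqrt d}.
\]
Since $\chi(E_{\nu\underline{\hat l}})=d\nu^2 r_{\underline{\hat l}}(\nu)$, the right-hand side equals $\nu\cdot\frac{r+1}{\chi}$. It is therefore exactly $\nu$ when $\chi=\rank+1$; this is where your observation about $E_{\frac{2y_0}{x_0-1}\underline{l}}$ genuinely enters. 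Plugging in $\nu=\frac{2y_0}{x_0-1}$ and $r=\frac{x_0-1}{2}$ (Lemma \ref{lem_rank_chi}) gives $\frac{x_0+1}{2dy_0}=\frac{2y_0}{x_0-1}$ by the Pell relation, and the proof is done. No characterization of the supremum is needed.

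A few further corrections:
\begin{itemize}
\item Your proposed ``minimum'' is the wrong extremum. Lemma \ref{lem_rank_chi}'s proof does not use minimality, so every Pell solution $(x_k,y_k)$ gives $\chi=\rank+1$ at $\frac{2y_k}{x_k-1}$. These values decrease to $1/\sqrt d$, so your set has no minimum. The minimal solution corresponds to the largest such slope, not the smallest.
\item The hypothesis $2d\mid(\underline{l}\cdot D)$ plays no role in the lower bound; it is only used through Theorem \ref{thm_Rojas}.
\item The companion bundle $E_{\frac{2y_0}{x_0+1}\underline{l}}$ is not needed.
\end{itemize}
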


\begin{proof}
Let $d$ be a positive integer that is not a perfect square, and let $(x_0,y_0)$ be  the minimal positive solution to  Pell's equation $x^2-4d y^2=1$.
Following the notation in \cite{Alvarado:2024aa},
let $\underline{\hat{l}}$ be the dual polarization of $\underline{l} $ on $\widehat{X}$.
By \cite[Theorem D and (1.9)]{Alvarado:2024aa}, we have
\begin{align*}
\beta(X,\underline{l}) \geq \sup_{\nu > \frac1{\sqrt{\chi(\underline{\hat{l}})}} } \frac{1+ r_{\underline{\hat{l}}}(\nu)}{d  \nu   r_{\underline{\hat{l}}}(\nu)},
\end{align*}
where $\nu $ in the supremum is a rational number and $r_{\underline{\hat{l}}}(\nu) =\rank E_{\nu \underline{\hat{l}}} $ defined by \ref{eq_rank,chi}.
Since the type of $\underline{\hat{l}}$ is also $(1,d)$,
we can take $\nu = \frac{2y_0}{x_0-1} > \frac{1}{\sqrt{d}} =\frac1{\sqrt{\chi(\underline{\hat{l}})}} $.
Then we have
\begin{align}\label{eq_rank}
r_{\underline{\hat{l}}}\left( \frac{2y_0}{x_0-1} \right)   =\frac{x_0-1}{2}
\end{align}
by Lemma \ref{lem_rank_chi}.
Hence it holds that 
\begin{align*}
\beta(X,\underline{l}) \geq \frac{1+\frac{x_0-1}{2} }{ d \cdot  \frac{2y_0}{x_0-1} \cdot  \frac{x_0-1}{2}} = \frac{x_0+1}{2dy_0} =\frac{2y_0}{x_0-1}.
\end{align*}
Since $\beta(X,\underline{l}) \leq \frac{2y_0}{x_0-1}$ by Theorem \ref{thm_Rojas},
this completes the proof.
\end{proof}

\begin{rem}
Since Theorem \ref{thm_Rojas}, \cite[Theorem D and (1.9)]{Alvarado:2024aa}, and Lemma \ref{lem_rank_chi} hold over any algebraically closed field of characteristic zero,
so does Proposition \ref{prop_Rojas}.
\end{rem}

\begin{ex}
For certain values of $d$, we can write down the minimal solution $(x_0,y_0)$ explicitly as follows (see \cite[Theorem 3.2.1]{MR1383823} for example):
\begin{enumerate}
\setlength{\itemsep}{1mm}
\item Let $m, k$ be positive integers with $k \mid m$.
If $d=m^2 \pm k$, then $\left(\frac{2m^2}k \pm 1, \frac{m}k \right)$ is the minimal solution to $x^2-4dy^2=1$.
Hence for $(X,\underline{l})$ in Theorem \ref{thm_Rojas}, we have
\[
\beta(X,\underline{l}) = 
\begin{cases}
\frac{1}{m}  &  \text{ if } d=m^2+k \\[1mm]
\frac{m}{d}  &   \text{ if } d=m^2-k .
\end{cases}
\]
This generalizes \cite[Lemmas A.1,A.4]{MR4626890}.
\item Let  $m, k$ be positive integers with $4k \mp 1 \mid 2m \pm 1$, and set $s =\frac{2m \pm 1}{4k \mp 1}$.
If $d=m^2 \pm (m +  k)$, then $(2 (2m \pm 1)s \pm 1, 2s)$ is the minimal solution to $x^2-4dy^2=1$.
Hence for $(X,\underline{l})$ in Theorem \ref{thm_Rojas}, we have
\[
\beta(X,\underline{l}) = 
\begin{cases}
\frac{2}{2m+1}  &  \text{ if } d=m^2+m+k \\[1mm]
\frac{2m-1}{2d}  &   \text{ if } d=m^2 -m-k  .
\end{cases}
\]
\end{enumerate}
\end{ex}

\begin{rem}\label{remark_coh_rank_function}
For $(X,\underline{l})$ in Theorem \ref{thm_Rojas},
we have 
\begin{align*}
h^0_{\cali_o,\underline{l}} (x) = 
\begin{cases}
0 &  \text{ if  } x \leq \frac{2y_0}{x_0+1}  \\[1mm]
\frac{d(x_0+1)}{2} x^2- 2dy_0 x + \frac{x_0-1}{2} &  \text{ if  } \frac{2y_0}{x_0+1}  \leq x \leq \frac{2y_0}{x_0-1}  \\[1mm]
dx^2-1  &  \text{ if  } x \geq  \frac{2y_0}{x_0-1} 
\end{cases} 
\end{align*}
by \cite[Theorem A (2)]{MR4363833} and Proposition \ref{prop_Rojas}.
In particular, $\beta^0(X,\underline{l}) = \frac{2y_0}{x_0+1} $.
\end{rem}

\section{Proof of Theorem \ref{thm_BFT_surface}}\label{sec_proof}

Throughout this section, $\underline{l}=[L]$ and $\underline{n}=[N]$ are polarizations on an abelian surface $X$, possibly $\underline{l}=\underline{n}$.
Assume that $\underline{l}$ is of type $(1,d)$ with non-perfect square $d$.
Let $(x_0,y_0)$ be the minimal solution to $x^2-4d y^2=1$.

\subsection{Resolutions of $\cali_o$ by semihomogeneous bundles}

Let $F$ be a coherent sheaf on $X$ and $\chi_{F,\underline{n}} (x) $ be the Hilbert polynomial of $F$ with respect to $\underline{n}$.
Then 
\[
\chi_{F,\underline{n}} (x) = h^0_{F,\underline{n}} (x) - h^1_{F,\underline{n}} (x)  + h^2_{F,\underline{n}} (x)  
\]
holds for any $ x \in \Q$,
where $h^i_{F,\underline{n}} (x)  $ is the cohomological rank function in \S \ref{subsec_BFT}.
If $F=\cali_o$ and $x \geq 0$,  $h^2_{\cali_o,\underline{n}} (x)  =0$ and hence $\chi_{\cali_o,\underline{n}} (x) = h^0_{\cali_o,\underline{n}} (x) - h^1_{\cali_o,\underline{n}} (x) $ holds.

\begin{lem}\label{lem_h^i_semihomogeneous}
Let $G = E_{-\lambda \underline{l}}$ be a simple semihomogeneous bundle for $\lambda \in \Q_{>0}$. 
Then
\begin{align*}
(h^0_{G,\underline{n}} (x) ,h^1_{G,\underline{n}} (x) ,h^2_{G,\underline{n}} (x) ) 
=\begin{cases}
(0,0,\chi_{G,\underline{n}} (x))   & \text{ if } x < s_{-}(G,\underline{n}) \\[1mm]
(0,-\chi_{G,\underline{n}} (x),0)   &  \text{ if } s_{-}(G,\underline{n})  < x < s_+(G,\underline{n}) \\[1mm]
(\chi_{G,\underline{n}} (x),0,0)   &  \text{ if } x > s_+(G,\underline{n}),
\end{cases}
\end{align*}
where 
\begin{align*}
 s_{\pm} (G,\underline{n}) \coloneqq \lambda   \frac{(\underline{l} \cdot \underline{n} ) \pm \sqrt{(\underline{l} \cdot \underline{n})^2 - (\underline{l}^2) (\underline{n}^2)}}{(\underline{n}^2)} .
\end{align*}
\end{lem}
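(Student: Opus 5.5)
Writing $x=a/b$, the plan is to turn the computation of $h^i_{G,\underline{n}}(x)$ into a statement about the index of a semihomogeneous bundle on $X$. Everything then reduces to the sign of a quadratic form.

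\textbf{Step 1 (reduction to a semihomogeneous bundle).} By definition, $h^i_{G,\underline{n}}(x)=b^{-4}h^i(X,\mu_b^*G\otimes N^{ab}\otimes P_\alpha)$ for general $\alpha$. Pullbacks and tensor products of semihomogeneous bundles by isogenies and line bundles are again semihomogeneous. Hence $V:=\mu_b^*G\otimes N^{ab}$ is semihomogeneous with slope
\[
c_1(V)/\rank V=b^2(-\lambda\underline{l})+ab\,\underline{n}=b^2(x\underline{n}-\lambda\underline{l}).
\]
Moreover $V$ is an extension (in fact a direct sum, by Mukai) of simple semihomogeneous bundles of this same slope, up to twists by $P_\beta$.

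\textbf{Step 2 (index theorem for simple semihomogeneous bundles).} For a simple semihomogeneous bundle $E$ whose slope $\delta=c_1(E)/\rank E\in\NS(X)_\Q$ is nondegenerate, Mukai shows that $E$ satisfies IT with index equal to the index of $\delta$, i.e.\ the number of negative eigenvalues. This follows because $E$ becomes a sum of line bundles after an isogeny, together with Mumford's index theorem. After twisting by the general $P_\alpha$, all such summands have cohomology concentrated in one degree. We therefore get $h^i=\chi$ or $-\chi$ in the single degree $i=i(\delta)$, and the vanishing statements follow.

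\textbf{Step 3 (the index of $\delta_x=x\underline{n}-\lambda\underline{l}$ on a surface).} On an abelian surface, a class $D$ with $(D^2)>0$ has index $0$ if $(D\cdot\underline{n})>0$ and index $2$ if $(D\cdot\underline{n})<0$. A class with $(D^2)<0$ has index $1$. Compute
\[
(\delta_x^2)=x^2(\underline{n}^2)-2x\lambda(\underline{l}\cdot\underline{n})+\lambda^2(\underline{l}^2).
\]
By Hodge index, $(\underline{l}\cdot\underline{n})^2\geq(\underline{l}^2)(\underline{n}^2)$, so the roots of this quadratic in $x$ are exactly $s_\pm(G,\underline{n})$. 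Both roots are positive because $\lambda>0$ and $(\underline{l}\cdot\underline{n})>0$. Consequently:
\begin{itemize}
\item for $x<s_-$, we have $(\delta_x^2)>0$ and $(\delta_x\cdot\underline{n})<0$ (note that $x<s_-$ implies $x(\underline{n}^2)<\lambda(\underline{l}\cdot\underline{n})$), so the index is $2$;
\item for $s_-<x<s_+$, we have $(\delta_x^2)<0$, so the index is $1$;
\item for $x>s_+$, the index is $0$.
\end{itemize}
In each case the unique nonzero $h^i_{G,\underline{n}}(x)$ equals $(-1)^i\chi_{G,\underline{n}}(x)$, which gives the three rows of the statement.

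If $(\underline{l}\cdot\underline{n})^2=(\underline{l}^2)(\underline{n}^2)$, then $s_-=s_+$ and the middle range is empty, consistent with the statement.

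\textbf{Main obstacle.} The expected difficulty is Step 2: justifying that a general twist has cohomology in only one degree, with the index read off from the signature of the slope. I would cite Mukai \cite{MR498572} for the fact that a simple semihomogeneous bundle $E_\delta$ is of the form $\pi_*(M)$ for an isogeny $\pi$ and a line bundle $M$ with $\pi^*$ of the slope equal to $[M]$. The result then reduces to Mumford's vanishing for nondegenerate line bundles twisted by a general element of $\Pic^0$.
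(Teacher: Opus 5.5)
Your argument is correct, but it is packaged differently from the paper's proof.

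\textbf{The paper's route.} The paper never looks at the bundles $\mu_b^*G\otimes N^{ab}$. It takes Mukai's isogeny $\pi\colon Y\to X$ with $\pi^*G\simeq M^{\oplus r}$. It then uses the compatibility of cohomological rank functions with isogenies (Lemma 2.1.3 of Alvarado--Pareschi) to get $h^i_{G,\underline{n}}(x)=\frac{r}{\deg\pi}h^i_{M,\pi^*\underline{n}}(x)$ for all $x$ simultaneously. Finally it quotes Jiang--Pareschi's Example 4.1 for the cohomological rank functions of a line bundle, observing only that $s_\pm$ are the roots of the Hilbert polynomial.

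\textbf{Your route.} You fix $x=a/b$ and work with the actual bundle $V=\mu_b^*G\otimes N^{ab}$ on $X$. You filter it by simple semihomogeneous bundles of slope $b^2(x\underline{n}-\lambda\underline{l})$ and apply Mukai's index theorem for simple semihomogeneous bundles with nondegenerate slope. You then read off the index by your Hodge-index sign analysis.

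\textbf{Comparison.} The paper's route is shorter and never needs to know how $\mu_b^*G$ decomposes. Yours bypasses the cohomological-rank-function machinery. It also makes explicit why the three regimes carry index $2$, $1$, $0$ respectively, which the paper leaves implicit in its appeal to Example 4.1.

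\textbf{One small point.} In general Mukai's structure theorem gives $V\simeq\bigoplus_i F_i\otimes U_i$ with $F_i$ simple and $U_i$ unipotent. So the safe claim is a filtration, not a direct sum of simple bundles. Having cohomology concentrated in a single degree is preserved under extensions, so your argument goes through unchanged.
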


\begin{proof}
Let $r=\rank G$.
Since $G$ is semihomogeneous, we have 
\begin{align*}
\chi_{G,\underline{n}} (x)  
=\frac{r}2 \cdot \left(-\lambda \underline{l} + x \underline{n}  \right)^2 =\frac{r}{2} \cdot \left((\underline{n}^2) x^2 - 2 \lambda(\underline{l} \cdot \underline{n}) x + \lambda^2 (\underline{l}^2) \right), 
\end{align*}
whose roots are $ s_{\pm}(G,\underline{n})$.
By \cite[Proposition 7.3]{MR498572},
there exists an isogeny $\pi \colon Y \to X$ and a line bundle $M$ on $Y$ such that $\pi^* G \simeq M^{\oplus r}$.
By \cite[Lemma 2.1.3]{Alvarado:2024aa},
we have
\begin{align}\label{eq_M}
h^i_{G,\underline{n}} (x) =\frac1{\deg \pi} h^i_{\pi^*G,\pi^*\underline{n}} (x)  = \frac{r}{\deg \pi} h^i_{M,\pi^*\underline{n}} (x) 
\end{align}
for any $i$.
Since  $ s_\pm(G,\underline{n})$ are the roots of the Hilbert polynomial $ \chi_{M,\pi^*\underline{n}} (x) = \frac{\deg \pi}{r}\chi_{G,\underline{n}} (x) $,
\begin{align*}
(h^0_{M,\pi^*\underline{n}} (x) ,h^1_{M,\pi^*\underline{n}} (x) ,h^2_{M,\pi^*\underline{n}} (x) ) 
=\begin{cases}
(0,0,\chi_{M,\pi^*\underline{n}} (x))   & \text{ if } x < s_{-}(G,\underline{n}) \\[1mm]
(0,-\chi_{M,\pi^*\underline{n}} (x),0)   &  \text{ if } s_{-}(G,\underline{n}) < x < s_+(G,\underline{n}) \\[1mm]
(\chi_{M,\pi^*\underline{n}} (x),0,0)   &   \text{ if } x > s_+(G,\underline{n})
\end{cases}
\end{align*}
by \cite[Example 4.1]{MR4157109}.
Hence this lemma holds by \ref{eq_M} and $\chi_{G,\underline{n}} (x)  = \frac{r}{\deg \pi} \chi_{M,\pi^*\underline{n}} (x) $.
\end{proof}

\begin{prop}\label{prop_h^i}
Assume that there exists an exact sequence\footnote{\ref{eq_resolution_I_p} is numerically consistent with Lemma \ref{lem_rank_chi}, which ensures that $v(E_{-\frac{2y_0}{x_0+1} \underline{l}} ) - v(E_{-\frac{2y_0}{x_0-1} \underline{l}} ) = v(\cali_o)$.}
\begin{align}\label{eq_resolution_I_p}
0 \to E_{-\frac{2y_0}{x_0-1} \underline{l}} \to E_{-\frac{2y_0}{x_0+1} \underline{l}} \to \cali_o \to 0.
\end{align}
If  $x_0^2 (\underline{n}^2) \geq 2 y_0^2 (\underline{l} \cdot \underline{n})^2$, then 
\begin{align*}
\beta^0(X,\underline{n}) &= s_+\Big(E_{-\frac{2y_0}{x_0+1} \underline{l}}, \underline{n} \Big) =  \frac{2y_0}{x_0+1}  \cdot \frac{(\underline{l} \cdot \underline{n} ) + \sqrt{(\underline{l} \cdot \underline{n})^2 - (\underline{l}^2) (\underline{n}^2)}}{(\underline{n}^2)}  ,\\[1mm]
\beta(X,\underline{n}) &= s_- \Big(E_{-\frac{2y_0}{x_0-1} \underline{l}},\underline{n} \Big) =  \frac{2y_0}{x_0-1} \cdot  \frac{(\underline{l} \cdot \underline{n} ) - \sqrt{(\underline{l} \cdot \underline{n})^2 - (\underline{l}^2) (\underline{n}^2)}}{(\underline{n}^2)} .
\end{align*}
\end{prop}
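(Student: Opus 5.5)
Write $E=E_{-\frac{2y_0}{x_0-1}\underline{l}}$ and $F=E_{-\frac{2y_0}{x_0+1}\underline{l}}$, with $\lambda_E=\frac{2y_0}{x_0-1}>\lambda_F=\frac{2y_0}{x_0+1}$. Set $\delta=\frac{(\underline{l}\cdot\underline{n})-\sqrt{(\underline{l}\cdot\underline{n})^2-(\underline{l}^2)(\underline{n}^2)}}{(\underline{n}^2)}$ and $\delta'=\frac{(\underline{l}\cdot\underline{n})+\sqrt{\cdots}}{(\underline{n}^2)}$. Then $s_\pm(E,\underline{n})=\lambda_E\delta^{(\prime)}$ and $s_\pm(F,\underline{n})=\lambda_F\delta^{(\prime)}$.

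The plan is to apply the cohomological rank functions (with respect to $\underline{n}$) to the exact sequence \ref{eq_resolution_I_p}. Since $\mu_b^*$ is exact and tensoring with line bundles is exact, the sequence gives, for every rational $x$, a long exact sequence
\[
0\to H^0(E)\to H^0(F)\to H^0(\cali_o)\to H^1(E)\to H^1(F)\to H^1(\cali_o)\to H^2(E)\to H^2(F)\to H^2(\cali_o)\to 0,
\]
after pulling back by $\mu_b$ and twisting by $N^{ab}\otimes P_\alpha$ with $\alpha$ general. The vanishing pattern of Lemma \ref{lem_h^i_semihomogeneous} for $E$ and $F$ then controls $h^0_{\cali_o,\underline{n}}$ and $h^1_{\cali_o,\underline{n}}$.

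The key numerical input is the ordering
\[
s_-(F)<s_-(E)<s_+(F)<s_+(E).
\]
The outer inequalities hold because $\lambda_F<\lambda_E$. The middle one, $\lambda_E\delta<\lambda_F\delta'$, is equivalent to $(x_0+1)\delta<(x_0-1)\delta'$, that is,
\[
x_0\sqrt{D}>(\underline{l}\cdot\underline{n}),\qquad D=(\underline{l}\cdot\underline{n})^2-(\underline{l}^2)(\underline{n}^2).
\]
Squaring gives $(x_0^2-1)(\underline{l}\cdot\underline{n})^2>x_0^2(\underline{l}^2)(\underline{n}^2)$. Using $(\underline{l}^2)=2d$ and $x_0^2-1=4dy_0^2$, this becomes $2y_0^2(\underline{l}\cdot\underline{n})^2>x_0^2(\underline{n}^2)$.

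This is the \emph{reverse} of the hypothesis, so the hypothesis gives instead $s_+(F)\le s_-(E)$. The intended ordering is therefore
\[
s_-(F)<s_+(F)\le s_-(E)<s_+(E).
\]
Checking this direction carefully is the step I expect to need the most care, including the equality case.

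With this ordering I conclude as follows.

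\textbf{Small $x$.} For $0\le x<s_+(F)$, we have $h^0(F)=0$. We also have $h^1(E)=0$ whenever $x<s_-(E)$, which covers this range. Hence $h^0_{\cali_o}(x)=0$ there.

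\textbf{Middle range.} For $s_+(F)<x<s_-(E)$, we have $h^0_{\cali_o}=h^0(F)-h^0(E)=\chi_F(x)>0$, using $h^0(E)=h^1(E)=0$. Combined with the previous step, this gives $\beta^0(X,\underline{n})=s_+(F)$.

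\textbf{$h^1$ in the middle range.} For $s_+(F)<x<s_-(E)$, the vanishings $h^1(F)=0$ and $h^2(E)\ne0$, $h^2(F)=0$ give $h^1_{\cali_o}(x)=h^2(E)=\chi_E(x)>0$. This stays positive up to $s_-(E)$.

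\textbf{Large $x$.} For $x>s_-(E)$, we have $h^2(E)=0$ and $h^1(F)=0$ (since $x>s_+(F)$). Hence $h^1_{\cali_o}(x)=0$, and so $\beta(X,\underline{n})=s_-(E)$.

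In the boundary case $s_+(F)=s_-(E)$ the middle range is empty. I will handle it by continuity of the rank functions, or directly: at $x$ slightly below the common value, $h^1(F)$ is still positive, and that yields both infima.
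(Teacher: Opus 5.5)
Your proposal is correct and is essentially the paper's proof. After your self-correction, the hypothesis is exactly $s_+(F,\underline{n})\le s_-(E,\underline{n})$, and feeding the vanishing pattern of Lemma \ref{lem_h^i_semihomogeneous} into the long exact sequence gives the same piecewise description of $h^0_{\cali_o,\underline{n}}$ and $h^1_{\cali_o,\underline{n}}$. The few positivity checks you leave implicit follow at once from $h^0_{\cali_o,\underline{n}}(x)-h^1_{\cali_o,\underline{n}}(x)=\frac{(\underline{n}^2)}{2}x^2-1$ for $x\ge 0$, as in the paper: namely $h^1_{\cali_o,\underline{n}}>0$ on $[0,s_-(F,\underline{n})]$, and $h^0_{\cali_o,\underline{n}}>0$ just past the common value in the equality case.
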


\begin{proof}
Set $E=E_{-\frac{2y_0}{x_0-1} \underline{l}}, F=E_{-\frac{2y_0}{x_0+1} \underline{l}}$ for simplicity.
We note that  $ s_+(F, \underline{n})  \leq s_-(E,\underline{n}) $ by the assumption $x_0^2 (\underline{n}^2) \geq 2 y_0^2 (\underline{l} \cdot \underline{n})^2$.
By the exact sequence  \ref{eq_resolution_I_p} and Lemma \ref{lem_h^i_semihomogeneous}, we have
\begin{align*}
h^0_{\cali_o,\underline{n}} (x)  &= 
\begin{cases}
0 &  \text{ if }  x <s_+(F, \underline{n}) \\
 \chi_{F,\underline{n}} (x)    &  \text{ if }  s_+(F, \underline{n})  < x < s_-(E,\underline{n}) \\
 \chi_{\cali_o,\underline{n}} (x) = \frac{(\underline{n}^2)}2 x^2-1 \hphantom{--} &  \text{ if }  x >  s_-(E,\underline{n}) 
\end{cases}
\end{align*}
and $h^1_{\cali_o,\underline{n}} (x)  =h^0_{\cali_o,\underline{n}} (x)- \frac{(\underline{n}^2)}2 x^2+1 $.
Hence this proposition holds.
\end{proof}

\begin{rem}
The condition $x_0^2 (\underline{n}^2) \geq 2 y_0^2 (\underline{l} \cdot \underline{n})^2$ is equivalent to $ \left( (\underline{l}^2) + \frac{1}{2y_0^2} \right) (\underline{n}^2) \geq (\underline{l} \cdot \underline{n})^2$,
which is satisfied if $\underline{n} \in \NS(X)$ is sufficiently close to being proportional to $\underline{l}$.
\end{rem}

To apply Proposition \ref{prop_h^i}, we need to show the existence of the exact sequence \ref{eq_resolution_I_p}.
In the following subsections, we show the existence in some cases.

\subsection{The case when $2d \mid (\underline{l} \cdot D)$ for every divisor $D$}

For a coherent sheaf $F $, let $\mu_{\underline{l}}(F) \coloneqq \frac{\underline{l} \cdot \ch_1(F)}{\underline{l}^2 \cdot \ch_0(F)}$ be the slope.
Let $D^b(X)$ be the bounded derived category of $X$.
For $\beta \in \R$, let
\begin{align*}
\calf_\beta &\coloneqq \{ F \in \Coh(X) \mid \mu_{\underline{l}}(S) \leq  \beta \text{ for any subsheaf } S \subset F \},\\
\calt_\beta &\coloneqq \{ F \in \Coh(X) \mid \mu_{\underline{l}}(Q) >  \beta \text{ for any quotient } F \twoheadrightarrow Q \},\\
\Coh^\beta(X) &\coloneqq \{ F \in D^b(X) \mid  \calh^{-1}(F) \in \calf_\beta, \calh^0(F) \in \calt_\beta, \calh^i(F) =0 \text{ for } i \neq 0,-1 \}.
\end{align*}
For each $(\alpha, \beta) \in \R_{>0} \times \R$, there exists a Bridgeland stability condition 
$\sigma_{\alpha, \beta} = (\Coh^\beta (X), Z_{\alpha,\beta})$ by \cite{MR2376815}, \cite{Arcara_2012}.
We refer the reader to \cite{MR3729077} and \cite[\S 2]{MR4579982} for Bridgeland stability.

\begin{prop}\label{prop_resolution_Rojas}
Assume that $2d \mid (\underline{l} \cdot D)$ for every divisor $D$ as in Theorem \ref{thm_Rojas}.
Then there exists an exact sequence $0 \to E_{-\frac{2y_0}{x_0-1} \underline{l}} \to E_{-\frac{2y_0}{x_0+1} \underline{l}} \to \cali_o \to 0 $. 
\end{prop}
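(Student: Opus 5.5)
We plan to follow Lahoz--Rojas and Rojas. The idea is to study the object $\cali_o$ in the $(\alpha,\beta)$-plane of stability conditions $\sigma_{\alpha,\beta}$ with respect to $\underline{l}$, and to produce the resolution as the destabilizing sequence at the wall where $\cali_o$ is destabilized. Since $\beta(X,\underline{l})=\frac{2y_0}{x_0-1}$ by Proposition \ref{prop_Rojas}, Rojas' analysis forces this wall to be the one attached to the first Pell solution. Under the divisibility hypothesis $2d \mid (\underline{l}\cdot D)$, every class $v=(r,c,\chi)$ satisfies $\underline{l}\cdot c \in 2d\Z$. So, for the purposes of $Z_{\alpha,\beta}$ (which only sees $\underline{l}\cdot \ch_1$), we may replace $c$ by $k\underline{l}$ with $k\in\Z$. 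The numerics then reduce to a rank-one Picard lattice, and the walls for $v(\cali_o)=(1,0,-1)$ are governed by solutions of $x^2-4dy^2=1$.

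\textbf{Step 1: locating the wall.} In Rojas' description, $\beta(X,\underline{l})$ equals the value at which the vertical line $\beta=-\beta(X,\underline{l})$ hits the last wall for $\cali_o$ (equivalently for $\cali_o$ shifted). Knowing that $\beta(X,\underline{l})=\frac{2y_0}{x_0-1}$ rather than $\frac{2y_1}{x_1-1}$, we identify the actual wall. It is the semicircle through the points $\beta=-\frac{2y_0}{x_0-1}$ and $\beta=-\frac{2y_0}{x_0+1}$, which are the $\alpha\to 0$ endpoints of the Chern characters $v(E_{-\frac{2y_0}{x_0\pm1}\underline{l}})$ computed in Lemma \ref{lem_rank_chi}. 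Along this wall, $\cali_o$ is strictly semistable. We take a Jordan--H\"older (or destabilizing) sequence $0\to A\to \cali_o\to B\to 0$ in $\Coh^\beta(X)$, or the rotated version $A\to F\to \cali_o$.

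\textbf{Step 2: identifying the factors.} By the numerics, with $\underline{l}\cdot\ch_1$ forced into multiples of $2d$ and the Bogomolov inequality $\Delta\ge0$ holding for $\sigma$-semistable objects, the only classes that can destabilize along this wall are $v_F=(\frac{x_0+1}2,-y_0\underline{l},\frac{x_0-1}2)$ and $v_E=(\frac{x_0-1}2,-y_0\underline{l},\frac{x_0+1}2)$, up to numerical equivalence in the $\underline{l}$-direction. Both have $\Delta=0$. Objects with $\Delta=0$ that are $\sigma_{\alpha,\beta}$-stable for $\alpha\to0$ are shifts of $\mu$-stable semihomogeneous bundles, which on an abelian surface is a standard result used in Lahoz--Rojas. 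We then have to check that their $\ch_1$ is actually $-y_0\underline{l}$, and not merely numerically equivalent against $\underline{l}$. We plan to argue this from $\Delta=0$ together with the Hodge index theorem: $\ch_1^2=2\ch_0\ch_2$ and $\underline{l}\cdot\ch_1$ fixed force $\ch_1$ to be proportional to $\underline{l}$. Simplicity and uniqueness in item (1) of the semihomogeneous facts then identify the factors as $E_{-\frac{2y_0}{x_0\pm1}\underline{l}}\otimes P_\alpha$. After absorbing the $P_\alpha$ (the $\alpha$ that appear are determined by the map, and $E_{\lambda\underline{l}}$ is only defined up to $P_\alpha$ anyway), we obtain a triangle $E\to F\to \cali_o$.

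\textbf{Step 3: upgrading to a short exact sequence in $\Coh(X)$.} Here $E$ and $F$ are vector bundles, and $\cali_o$ is a sheaf. The long exact cohomology sequence of the triangle shows that $E\to F$ is injective with cokernel $\cali_o$, provided there is no $\calh^{-1}$ contribution. This follows from $\Hom(E,F)\to$ being generically injective, which holds since both have the same rank ratio and the quotient has rank one. We expect the main obstacle to be Step 2. Ruling out other destabilizing classes requires Rojas' careful wall analysis using the divisibility hypothesis, and our plan is to import that analysis directly from \cite{MR4363833}, citing the precise statement where the minimal Pell solution yields the destabilizing subobject.
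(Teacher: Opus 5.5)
Your plan is essentially the paper's proof. The paper likewise combines $\beta(X,\underline{l})=\frac{2y_0}{x_0-1}$ with \cite[Lemma 3.1]{MR4363833} to obtain a destabilizing sequence $0\to S\to\cali_o\to Q\to0$ with the classes of Lemma~\ref{lem_rank_chi}, applies \cite[Proposition 2.11 (2), Example 2.12 (1)]{MR4579982} to see that $Q[-1]$ and $S$ are semihomogeneous, and uses equality in the Hodge index theorem, $\ch_1^2=2dy_0^2=(\underline{l}\cdot\ch_1)^2/(\underline{l}^2)$, to force $\ch_1$ to be proportional to $\underline{l}$. It then reads off $0\to E\to F\to\cali_o\to0$ from the long exact sequence of cohomology sheaves, so your ``generic injectivity'' remark in Step 3 is unnecessary: $\calh^{-1}(\cali_o)=\calh^{0}(E[1])=0$ already gives exactness. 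The one hypothesis you use without justification is that $S$ and $Q$ remain $\sigma_{\alpha,\beta}$-semistable for all $\alpha>0$ on the strip $-\frac{2y_0}{x_0-1}<\beta<-\frac{2y_0}{x_0+1}$; this is what licenses the $\alpha\to0$ criterion, and for $S$ it also gives the $\mu_{\underline{l}}$-semistability needed to place $S[1]$ in $\Coh^{-\frac{2y_0}{x_0+1}}(X)$. The paper cites \cite[\S 7.2]{MR4579982} for this, and it also follows from the standard fact that a tilt-semistable object with vanishing $\underline{l}$-discriminant meets no actual walls.
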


\begin{proof}
By \cite[Proposition 14.2]{MR2376815}, \cite[Exercise 6.27]{MR3729077}, $\cali_o$ is $\sigma_{\alpha,\beta}$-semistable for $\alpha \gg 0$ and $\beta < 0$.
By Proposition \ref{prop_Rojas}, we have $\beta(X,\underline{l}) =\frac{2y_0}{x_0-1}$.
Combining this with  \cite[Lemma 3.1]{MR4363833} and the preceding paragraph therein,
$\cali_o$ is destabilized along a wall defined by an exact sequence $0 \to S \to \cali_o \to Q \to 0$ in $\Coh^{- \frac{\sqrt{d}}{d}} (X)$
such that
\[
v(S) = \left( \frac{x_0+1}2, \ch_1(S), \frac{x_0-1}{2} \right), \quad v(Q) = \left( -\frac{x_0-1}2, \ch_1(Q), -\frac{x_0+1}{2} \right)
\]
with $\ch_1(S) + \ch_1(Q) = 0$ and $\underline{l} \cdot \ch_1(S) = -2dy_0$.\footnote{In \cite{MR4363833}, $v(F)$ is defined by $v(F) = (\underline{l}^2 \cdot \ch_0(F), \underline{l} \cdot \ch_1(F), \ch_2(F) )$, which differs from ours.}
By  \cite[\S 7.2]{MR4579982}, $S, Q $ are $\sigma_{\alpha, \beta}$-semistable for any $\alpha >0$ and $-\frac{2y_0}{x_0-1} <\beta < -\frac{2y_0}{x_0+1}$.

Applying  \cite[Proposition 2.11 (2), Example 2.12 (1)]{MR4579982} to $Q \in  \Coh^{-\frac{2y_0}{x_0-1}} (X)$,
we find that $E \coloneqq Q[-1]$ is a  semihomogeneous bundle on $X$ since the base field is $\C$.
Since $E$ is semihomogeneous with $v(E) =-v(Q) = \left( \frac{x_0-1}2, \ch_1(S), \frac{x_0+1}{2}  \right)$, we have
\[
\ch_1(S)^2 = 2 \cdot \frac{x_0-1}2 \cdot \frac{x_0+1}{2}  =2dy_0^2 = \frac{(\underline{l} \cdot \ch_1(S))^2 }{\underline{l}^2 }.
\]
Hence $\ch_1(S)=\ch_1(E)$ is proportional to $\underline{l}$.
Thus
$E=E_{-\frac{2y_0}{x_0-1} \underline{l}} $
by $\rank E=\frac{x_0-1}{2}$ and $\underline{l} \cdot \ch_1(E)= -2dy_0$.

On the other hand, $S$ is a Gieseker semistable torsion-free sheaf with respect to $\underline{l}$ by \cite[Exercise 6.27]{MR3729077}.
Hence $S[1] \in \Coh^{-\frac{2y_0}{x_0+1}} (X)$ since  $S$ is $\mu_{\underline{l}}$-semistable with  $\mu_{\underline{l}} (S) = -\frac{2y_0}{x_0+1}$.
Then $S$ is semihomogeneous by \cite[Proposition 2.11 (2), Example 2.12 (1)]{MR4579982} again and hence  $S=E_{-\frac{2y_0}{x_0+1} \underline{l}} $.
Since $0 \to S \to \cali_o \to E[1] \to 0$ induces $0 \to E \to S \to \cali_o \to 0$, this completes the proof.
\end{proof}

\begin{proof}[Proof of Theorem \ref{thm_BFT_surface}]
This theorem follows from Propositions \ref{prop_h^i} and \ref{prop_resolution_Rojas}.
\end{proof}

\begin{rem}
In \cite[page.~772]{MR4518246}, the author asks whether $\beta(\un_1)^{-1} + \beta(\un_2)^{-1} \leq \beta(\un_1+\un_2)^{-1} $ holds for arbitrary polarizations $\un_1,\un_2$.
We see that the answer to this question is negative. 
In fact, for $\un$ satisfying the assumption in Theorem \ref{thm_BFT_surface},  it holds that
\begin{align*}
\beta(X,\un)^{-1} = \frac{x_0-1}{2y_0} \cdot   \frac{(\underline{l} \cdot \underline{n} ) + \sqrt{(\underline{l} \cdot \underline{n})^2 - (\underline{l}^2) (\underline{n}^2)}}{(\underline{l}^2)} .
\end{align*}
If we take $\un_1  = m \ul + \underline{c}, \un_2=m \ul  -\underline{c}$ for a fixed $\underline{c} \in \NS(X)$ not proportional to $\underline{l}$ and an integer $m \gg 0$,
then $\un_1,\un_2$ are polarizations satisfying the assumption in Theorem \ref{thm_BFT_surface}.
A direct computation shows that $\beta(\un_1)^{-1} + \beta(\un_2)^{-1} >  \beta(\un_1+\un_2)^{-1} $ holds in this case.
For an explicit example, see Example \ref{ex_d=2}.
\end{rem}

\subsection{The case $d \leq 6$}

In this subsection, 
we show the following proposition, which weakens the assumptions of Proposition \ref{prop_resolution_Rojas} for $d \leq 6$.

\begin{prop}\label{prop_d=2,6}
Assume that $d \in \{2,3,5,6\}$ and there is no elliptic curve $C$ on $X$ with $(\underline{l} \cdot C )  < \frac{x_0-1}{y_0}$.
Then there exists an exact sequence $0 \to E_{-\frac{2y_0}{x_0-1} \underline{l}} \to E_{-\frac{2y_0}{x_0+1} \underline{l}} \to \cali_o \to 0 $.
\end{prop}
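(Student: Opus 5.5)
We follow the pattern of the proof of Proposition \ref{prop_resolution_Rojas}, and handle the four values of $d$ explicitly. For $d=2,3,5,6$ the minimal solutions of $x^2-4dy^2=1$ are $(x_0,y_0)=(3,1),(7,2),(9,2),(5,1)$. So the two semihomogeneous bundles in question have explicit invariants:
\[
v(E)=\Big(\tfrac{x_0-1}2,\,-y_0\underline{l},\,\tfrac{x_0+1}2\Big),\qquad v(F)=\Big(\tfrac{x_0+1}2,\,-y_0\underline{l},\,\tfrac{x_0-1}2\Big),
\]
where $E=E_{-\frac{2y_0}{x_0-1}\underline{l}}$, $F=E_{-\frac{2y_0}{x_0+1}\underline{l}}$, by Lemma \ref{lem_rank_chi} after dualizing.

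The aim is to construct the sequence directly rather than through walls, in three steps.

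\emph{Step 1.} Compute $\chi(E,F)=-\langle v(E),v(F)\rangle$ via the Mukai pairing. Since the difference $v(F)-v(E)=v(\cali_o)$, this should give $\hom(E,F)\geq \chi(E,F) = \text{(something positive)}$. We also want $\Ext^2(E,F)=\Hom(F,E)^\vee=0$, which holds because both are $\mu_{\underline{l}}$-stable and $\mu(F)>\mu(E)$ (the slopes are $-\frac{2y_0}{x_0+1}>-\frac{2y_0}{x_0-1}$). This gives nonzero maps $E\to F$.

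\emph{Step 2.} Show that a general (or suitable) map $\phi\colon E\to F$ is injective with cokernel a torsion-free rank one sheaf with $c_1=0$ and $\chi=0$, i.e.\ $\cali_p\otimes P_\alpha$. Translating then yields $\cali_o$, and twisting $E$, $F$ by $P_\alpha$ is harmless.
\begin{itemize}
\item Injectivity follows from stability: the image of $\phi$ is a quotient of $E$ and a subsheaf of $F$, so the kernel would force a slope violation unless $\phi$ is injective.
\item The cokernel $C$ has rank $1$, $c_1=0$ and $\chi=0$. It remains to show it is torsion-free. If $C$ has torsion $T$ supported on a curve $D$, then $F\to C/T$ gives a quotient of $F$ of smaller $c_1$. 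From the stability of $F$ and the explicit small numbers, $(\underline{l}\cdot D)$ is bounded by $\frac{x_0-1}{y_0}$ (for example $2$ when $d=2$). Such a curve of small degree on an abelian surface must be an elliptic curve: either the bound forces $D^2=0$ by Hodge index, or $D^2>0$ would contradict the bound. This is excluded by hypothesis. Zero-dimensional torsion is ruled out similarly, since then $C/T$ would have $\chi<0$ with $c_1=0$, which is impossible for a quotient of $F$ by a Bogomolov-type/Hom argument with $\chi(F,\cdot)$.
\end{itemize}

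\emph{Main obstacle.} The main obstacle is the torsion-freeness of the cokernel in Step 2. It is here, and only here, that the hypotheses on $C$ and on $d\leq 6$ enter. I would try first the Bridgeland approach of Proposition \ref{prop_resolution_Rojas}: show that the only potential walls for $\cali_o$ above the numerical wall of $(E,F)$ come from elliptic curves of degree $<\frac{x_0-1}{y_0}$, using that for $d\leq 6$ the ranks are tiny (at most $4$). This would give the destabilizing sequence $0\to S\to\cali_o\to Q\to 0$ exactly as before. Then $\ch_1(S)^2=2dy_0^2$ together with Hodge index should still force $\ch_1(S)$ proportional to $\underline{l}$. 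The divisibility hypothesis was only used to rule out other $\ch_1$ with the same $\underline{l}$-degree, and for small $d$ one can instead enumerate the finitely many classes $\ch_1(S)=-y_0\underline{l}+D$ with $\underline{l}\cdot D=0$. Since $D^2\leq 0$, the condition $\ch_1(S)^2\geq 2dy_0^2$ (from Bogomolov for semihomogeneous $S$) forces $D=0$.
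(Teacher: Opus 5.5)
\textbf{There is a genuine gap in your Step 2: the injectivity claim is false, and injectivity is where the real content of the proposition lies.}

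Stability does not force $\phi\colon E\to F$ to be injective. If $\ker\phi\neq 0$, then $I=\im\phi$ is a proper quotient of $E$ and a proper subsheaf of $F$. Stability therefore only gives $\mu_{\underline{l}}(E)<\mu_{\underline{l}}(I)<\mu_{\underline{l}}(F)$, and since $\mu_{\underline{l}}(E)<\mu_{\underline{l}}(F)$ there is room for such an $I$. Concretely, take $d=3$, $(x_0,y_0)=(7,2)$:
\begin{itemize}
\item $E$ and $F$ have ranks $3$ and $4$, with $\underline{l}$-degree per rank $-4$ and $-3$;
\item an image of rank $r=2$ with $(\underline{l}\cdot\underline{h})=7$, where $\underline{h}=-\ch_1(I)$, has degree per rank $-7/2$;
\item this violates nothing.
\end{itemize}
Injectivity is automatic only for $d=2$, where $E$ is a line bundle.

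The paper excludes non-injective $\phi$ with two ingredients your proposal lacks.
\begin{enumerate}
\item \emph{Nefness from semihomogeneity.} Pull back along an isogeny on which $F^\vee$ (resp.\ $E$) splits as copies of one line bundle, and take $\wedge^r$ of the induced map. This shows that $\frac1r\underline{h}-\frac{2y_0}{x_0+1}\underline{l}$ and $\frac{2y_0}{x_0-1}\underline{l}-\frac1r\underline{h}$ are nef (Lemma \ref{lem_resolution_I_p}(1)). Combined with the slope bounds and Hodge index, this forces, for $d=3,5,6$, exactly $r=y_0$, $(\underline{l}\cdot\underline{h})=x_0$, $(\underline{h}^2)=2y_0^2$.
\item \emph{From the numerics to an elliptic curve.} Lemma \ref{lem_trivial_solution} turns this numerical solution into elliptic curves $C,C'$ with $\underline{l}=A[C]+B[C']$ and $AB=d$. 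Hence some elliptic curve has degree $\leq\sqrt d<\frac{x_0-1}{y_0}$, contradicting the hypothesis.
\end{enumerate}
So your statement that the hypotheses enter ``here, and only here'' (in torsion-freeness) is backwards. Torsion-freeness uses only the elliptic-curve hypothesis and works for every $d$. The restriction $d\leq 6$ is needed precisely for injectivity: the paper's remark shows that for $d=7$ the pair $r=29$, $\underline{h}=11\underline{l}$ survives all these constraints.

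\textbf{Your Bridgeland fallback does not close the gap.} The claim that every wall for $\cali_o$ above the $(E,F)$-wall comes from a low-degree elliptic curve is exactly what would need proof. Without the divisibility hypothesis you also lack Theorem \ref{thm_Rojas}, and hence Proposition \ref{prop_Rojas}, to locate the wall in the first place. Note too that divisibility is not what pins down $\ch_1(S)$ in Proposition \ref{prop_resolution_Rojas}; that is done by semihomogeneity plus Hodge index.

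\textbf{The rest of Step 2 is essentially fine, with small corrections.}
\begin{itemize}
\item Your quotient-slope argument for one-dimensional torsion is equivalent to Lemma \ref{lem_resolution_I_p}(2).
\item Zero-dimensional torsion needs an actual argument. For example, $\Ext^1(T,E)=H^1(E^\vee\otimes T)^\vee=0$ splits $E'\simeq E\oplus T$ inside the torsion-free sheaf $F$, a contradiction.
\item $\chi(\coker\phi)=\chi(F)-\chi(E)=-1$, not $0$.
\item Since $\chi(E,F)=1$ and $h^i(E^\vee\otimes F)=0$ for $i>0$, $\Hom(E,F)$ is one-dimensional, so there is no ``general'' map to choose.
\end{itemize}
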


To simplify the notation, we set
$
E \coloneqq E_{-\frac{2y_0}{x_0-1} \underline{l}} ,  F \coloneqq E_{-\frac{2y_0}{x_0+1} \underline{l}}
$.
Then we have
\begin{align*}
v(E) = \left( \frac{x_0-1}{2}, \, -y_0 \underline{l}, \, \frac{x_0+1}{2} \right) , \quad v(F) = \left( \frac{x_0+1}{2}, \,  -y_0 \underline{l}, \,  \frac{x_0-1}{2} \right) 
\end{align*}
by Lemma \ref{lem_rank_chi}.
By the Hirzebruch--Riemann--Roch theorem,
we have
\begin{align*}
\chi(E^\vee \otimes F) = \left(\frac{x_0-1}{2} \right)^2 -y_0^2 \cdot  \underline{l}^2 +\left(\frac{x_0+1}{2} \right)^2 = \frac{x_0^2 +1 - 4dy_0^2}{2} =1.
\end{align*}
Since $E^\vee \otimes F$ is semihomogeneous and $\ch_1 (E^\vee \otimes F) = y_0 \underline{l} $ is ample,
we have $h^i(E^\vee \otimes F)=0$ for $i >0$ by \cite[Proposition 2.2.1]{Alvarado:2024aa}.
Hence $\dim \Hom (E,F) =1$ and there exists a non-zero homomorphism
\[
f \colon E =E_{-\frac{2y_0}{x_0-1} \underline{l}}  \to F =E_{-\frac{2y_0}{x_0+1} \underline{l}} .
\]

\begin{lem}\label{lem_resolution_I_p}
\begin{enumerate}
\item Assume that $f$ is not injective and set $r\coloneqq \rank (\im f)$,  $\underline{h} \coloneqq -  \ch_1 (\im f) \in \NS(X)$.
Then 
\[
\frac1r \underline{h} -\frac{2 y_0}{x_0+1}  \underline{l}, \qquad \frac{2 y_0}{x_0-1}  \underline{l}-\frac{1}r\underline{h}
\]
are nef and 
\[
\frac{x_0-1}{y_0} < \frac{(\underline{l} \cdot \underline{h})}{r} < \frac{x_0+1}{y_0} .
\]
\item Assume that $f$ is injective and $\coker f$ has torsion.
Then there exists an elliptic curve $C \subset X$  with 
\[
(\underline{l} \cdot C)  < \frac{x_0-1}{y_0}.
\]
\item Assume that $f$ is injective and $\coker f$ is torsion-free.
Then $\coker f \simeq P \otimes \cali_p$ for some $P \in \widehat{X}$  and $p \in X$.
\end{enumerate}
\end{lem}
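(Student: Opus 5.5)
The plan is to work throughout with the slopes and the $\mu_{\underline{l}}$-stability of $E$ and $F$ recorded after Lemma \ref{lem_h^i_semihomogeneous}. We have $\mu_{\underline{l}}(E)=-\frac{2y_0}{x_0-1}<\mu_{\underline{l}}(F)=-\frac{2y_0}{x_0+1}$. Both sheaves are $\mu_{\underline{l}}$-stable by the properties of semihomogeneous bundles in \S\ref{sec_preliminaries}. We also use the structure statement from the proof of Lemma \ref{lem_h^i_semihomogeneous}: there is an isogeny $\pi$ with $\pi^*E\simeq M_E^{\oplus}$ and $\pi^*F\simeq M_F^{\oplus}$ for line bundles $M_E,M_F$ (one can take a common isogeny). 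The geometric input is that effective divisors on an abelian surface are nef, and that nefness descends along the finite map $\pi$.

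\textbf{Part (1).} Put $I=\im f$. Then $\pi^*I$ is a torsion-free quotient of $M_E^{\oplus}$, so $\pi^*I\otimes M_E^{-1}$ is a quotient of a trivial bundle. Its determinant is therefore globally generated, hence effective and nef. Dividing by $r$ and descending along $\pi$, we get that $\frac1r c_1(I)+\frac{2y_0}{x_0-1}\underline{l}=\frac{2y_0}{x_0-1}\underline{l}-\frac1r\underline{h}$ is nef.

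For the other class, let $S'\subset M_F^{\oplus}$ be the saturation of $\pi^*I$. Then $c_1(S')-c_1(\pi^*I)$ is effective, since $S'/\pi^*I$ is torsion. Also $M_F^{\oplus}/S'$ is a torsion-free quotient of a twist of a trivial bundle, so $\operatorname{rank}(S')\,c_1(M_F)-c_1(S')$ is effective by the same argument. Adding these two effective classes shows that $\frac1r\underline{h}-\frac{2y_0}{x_0+1}\underline{l}$ is nef.

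Intersecting both nef classes with $\underline{l}$, and using $\underline{l}^2=2d$ together with \ref{eq_pell}, gives the non-strict inequalities $\frac{x_0-1}{y_0}\leq \frac{(\underline{l}\cdot\underline{h})}{r}\leq\frac{x_0+1}{y_0}$. Strictness comes from stability:
\begin{itemize}
\item Since $f$ is not injective, $I$ is a proper quotient of the stable sheaf $E$, so $\mu(I)>\mu(E)$.
\item Since $r\leq\rank E<\rank F$, $I$ is a subsheaf of $F$ of smaller rank, so $\mu(I)<\mu(F)$.
\end{itemize}

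\textbf{Parts (2) and (3).} Now let $f$ be injective and $Q=\coker f$. By Lemma \ref{lem_rank_chi}, $v(Q)=v(F)-v(E)=(1,0,-1)$. Since $E$ and $F$ are locally free, $Q$ has projective dimension at most $1$. By Auslander--Buchsbaum, $Q$ therefore has no $0$-dimensional subsheaf. Hence if $Q$ has torsion $T$, then $D\coloneqq c_1(T)$ is a nonzero effective divisor.

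Let $F'\subset F$ be the preimage of $T$. Then $\rank F'=\rank E<\rank F$ and $c_1(F')=-y_0\underline{l}+D$. Stability of $F$ gives $\mu(F')<\mu(F)$. Unwinding this with $\rank E=\frac{x_0-1}2$ and $\underline{l}^2=2d$ yields $(\underline{l}\cdot D)<2dy_0\bigl(1-\frac{x_0-1}{x_0+1}\bigr)=\frac{x_0-1}{y_0}$.

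Now take any irreducible component $C$ of $D$. If $C^2>0$, then $C^2\geq2$ and the Hodge index theorem give $(\underline{l}\cdot C)^2\geq 2d\,C^2\geq4d$, so $(\underline{l}\cdot C)\geq 2\sqrt d$. But $\frac{x_0-1}{y_0}<\frac{x_0}{y_0}<2\sqrt d$ by the Pell equation, which is a contradiction. So every component satisfies $C^2=0$, and such a curve on an abelian surface is elliptic. This $C$ satisfies $(\underline{l}\cdot C)\leq(\underline{l}\cdot D)<\frac{x_0-1}{y_0}$, proving (2).

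For (3), a torsion-free rank-one sheaf with $c_1=0$ has the form $P\otimes\cali_Z$ with $P\in\widehat X$. Then $\chi(Q)=-\operatorname{length}Z=-1$ forces $Z=\{p\}$, so $Q\simeq P\otimes\cali_p$.

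The step I expect to need the most care is (1): getting the nefness of both classes with the correct signs via the saturation argument, and checking that strictness really follows from stability rather than merely from numerics.
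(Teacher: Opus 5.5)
Your proof follows the paper's route. In (1) you get the strict inequalities from $\mu_{\underline{l}}$-stability of $E$ and $F$, and the two nef classes from a nonzero-section argument after pulling back along an isogeny that splits $E$ and $F$ into line bundles. In (2) you compare $\mu(F')$ with $\mu(F)$ and then apply the Hodge index theorem, and in (3) you use $\chi(\coker f)=-1$. Your local variations all work:
\begin{itemize}
\item In (1) you saturate $\pi^*I$ inside $\pi^*F$, where the paper dualizes to $F^\vee\to(\im f)^\vee$ and takes $\wedge^r$.
\item In (2) you use Auslander--Buchsbaum ($\coker f$ has depth $\geq 1$ at every point) to exclude zero-dimensional torsion, where the paper uses the splitting argument via $\Ext^1(T,E)=0$.
\item You apply Hodge index to each component $C$ of $D$, where the paper applies it to $D$ itself.
\end{itemize}

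There is one false step, in (2). You write $\frac{x_0-1}{y_0}<\frac{x_0}{y_0}<2\sqrt d$ ``by the Pell equation''. But $x_0^2=4dy_0^2+1$ gives $\frac{x_0}{y_0}>2\sqrt d$, so the middle inequality goes the wrong way. The bound you actually need, $\frac{x_0-1}{y_0}<2\sqrt d$, is still true, because $(x_0-1)^2<(x_0-1)(x_0+1)=x_0^2-1=4dy_0^2$. This is the same computation as the paper's estimate $(D^2)<\frac{2(x_0-1)}{x_0+1}<2$. With this correction, your component-by-component argument goes through.

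One smaller overstatement in (1): the determinant of a globally generated torsion-free sheaf that is not locally free need not be globally generated. For example, $\cali_p\otimes\omega_S^{-1}$ on a degree-one del Pezzo surface, with $p$ the base point of $|-K_S|$. You only use that this determinant has a nonzero section, which does hold, and effective classes on an abelian surface are nef. So the conclusion stands.
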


\begin{proof}
(1) Since $(\underline{l} \cdot \ch_1(E))/\rank E = -\frac{x_0+1}{y_0}$ and $(\underline{l} \cdot \ch_1(F))/\rank F  = -\frac{x_0-1}{y_0}$,
the inequalities follow from the $\mu_{\underline{l}}$-stability of $E$ and $F$.

To show the nefness of $\frac1r \underline{h} -\frac{2 y_0}{x_0+1}  \underline{l}$, consider the homomorphism $g \colon F^\vee \to (\im f)^\vee$ between vector bundles.
Since $g$ is generically surjective,  $\wedge^r g \colon \wedge^r F^\vee \to \det (\im f)^\vee \eqqcolon H$ is non-zero.
By  \cite[Proposition 7.3]{MR498572},
there exists an isogeny $\pi \colon Y \to X$ and a line bundle $M$ on $Y$ such that $\pi^* F^\vee \simeq M^{\oplus \rank F}$.
We note that $\ch_1(M) = \frac{\ch_1(\pi^* F^\vee)}{\rank F} =\frac{2 y_0}{x_0+1} \pi^* \underline{l}$.
Then we have a non-zero map $\pi^* (\wedge^r g) \colon \wedge^r (M^{\oplus \rank F}) \to \pi^* H$.
In particular, there exists a non-zero map $M^{\otimes r} \to \pi^*H$ and hence
$\ch_1(\pi^* H) - r \ch_1(M) = \pi^* (\underline{h} -\frac{2 y_0 r}{x_0+1}  \underline{l})$ is nef.
Thus so is $\underline{h} -\frac{2 y_0 r}{x_0+1}  \underline{l}$.

By considering $E \to \im f$,  we see that $\frac{2 y_0}{x_0-1}  \underline{l} - \frac1r \underline{h} $ is nef similarly.
\\
(2) Let $T =(\coker f)_{\mathrm{tor}}$ be the torsion part.
By the snake lemma, we have an exact sequence
\[
0 \to E \to E' \to T \to 0,
\]
where $E'$ is the kernel of $F \to (\coker f)/T$.
If $T$ is a skyscraper sheaf, then $\Ext^1( T, E) =\Ext^1(E,T)^\vee= H^1(E^\vee\otimes T)^\vee=0$ and hence $ E' \simeq T \oplus E$,
which contradicts the torsion-freeness of $E' \subset F$.
Thus $\dim \Supp T =1$.

Then $\ch_1(E')=\ch_1(E) + \ch_1(T) =-y_0 \underline{l} +[D]$ for some non-zero effective divisor $D$.
By the $\mu_{\underline{l}}$-stability of $F$, we have
\[
\frac{\underline{l} \cdot \ch_1(E')}{\rank E'}=\frac{-2dy_0 +(\underline{l} \cdot D)}{\frac{x_0-1}{2}} < \frac{\underline{l} \cdot \ch_1(F)}{\rank F} = \frac{-2dy_0}{\frac{x_0+1}{2}}
\]
that is,
\[
(\underline{l} \cdot D) < \frac{4dy_0}{x_0+1} =\frac{x_0-1}{y_0}.
\]
By the Hodge index theorem,
we have
\begin{align*}
(D^2) \leq  (\underline{l} \cdot D)^2/(\underline{l}^2) < \frac{(x_0-1)^2}{2dy_0^2} =\frac{2(x_0-1)}{x_0+1}<2
\end{align*}
and hence $(D^2)=0$ by $(D^2) \in 2\Z_{\geq 0}$.
Then any irreducible component $C$ of $D$ is an elliptic curve with $(\underline{l} \cdot C) \leq (\underline{l} \cdot D) < \frac{x_0-1}{y_0}.$\\
(3) 
Since $\coker f$ is a torsion-free sheaf of rank one,
we have $\coker f \simeq \det F \otimes \det E^\vee \otimes \cali_Z$ for some zero-dimensional subscheme $Z \subset X$.
Since $\ch_1(E)=\ch_1(F) \in \NS(X)$,  the line bundle $ \det F \otimes \det E^\vee $ is numerically trivial.
Since $\chi(\coker f) =\chi(F) -\chi(E) =- 1$, the length of $Z$ is one.
\end{proof}

\begin{lem}\label{lem_trivial_solution}
Let $r$ be a positive integer and $\uh \in \NS(X)$ satisfying
\begin{align}\label{eq_trivial_solution0}
r=y_0, \quad (\underline{l}\cdot \underline{h}) =x_0, \quad (\underline{h}^2)=2y_0^2.
\end{align}
Then there exists an elliptic curve $C$ on $X$ such that $(\ul \cdot C) < \frac{x_0-1}{y_0}$.
\end{lem}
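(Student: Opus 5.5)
The key observation is that the classes $\underline{l}$ and $\underline{h}$ span a copy of the hyperbolic plane inside $\NS(X)$. That sublattice contains isotropic classes of small $\underline{l}$-degree, and on an abelian surface such classes come from elliptic curves.

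\emph{Step 1: the lattice $\Lambda=\Z\underline{l}+\Z\underline{h}$.} By \ref{eq_trivial_solution0} and $(\underline{l}^2)=2d$, the Gram matrix of $\underline{l},\underline{h}$ is $\begin{pmatrix} 2d & x_0\\ x_0 & 2y_0^2\end{pmatrix}$. Its determinant is $4dy_0^2-x_0^2=-1$ by the Pell equation.
\begin{itemize}
\item Since the determinant is nonzero, $\underline{l},\underline{h}$ are linearly independent, so $\Lambda$ has rank $2$.
\item $\Lambda$ is even, unimodular and indefinite, hence $\Lambda\simeq U$, the hyperbolic plane.
\end{itemize}
Choose a basis $e,f$ of $\Lambda$ with $e^2=f^2=0$ and $(e\cdot f)=1$. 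Write $\underline{l}=pe+qf$ with $p,q\in\Z$. Then $2pq=(\underline{l}^2)=2d$, so $pq=d$. Replacing $(e,f)$ by $(-e,-f)$ if needed, we may assume $p,q>0$, since $\underline{l}$ is ample. Then $(\underline{l}\cdot e)=q$ and $(\underline{l}\cdot f)=p$. After swapping $e$ and $f$, we get an isotropic class $e\in\NS(X)$ with
\[
0<(\underline{l}\cdot e)=\min(p,q)\leq\sqrt d .
\]

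\emph{Step 2: numerical comparison.} We need $\sqrt d<\frac{x_0-1}{y_0}$, i.e. $dy_0^2<(x_0-1)^2$. By \ref{eq_pell}, $dy_0^2=\frac{x_0-1}{2}\cdot\frac{x_0+1}{2}$, so the inequality is equivalent to $x_0+1<4(x_0-1)$, i.e. $3x_0>5$. This holds because $x_0\geq 3$.

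\emph{Step 3: from an isotropic class to an elliptic curve.} This is the main geometric input. Take a line bundle $M$ with class $e$. Since $e^2=0$, Riemann--Roch gives $\chi(M)=0$, so $K(M)$ has positive dimension. By the standard structure theory (Mumford), $M$ is then algebraically equivalent to the pullback of a line bundle of degree $\frac{(\underline{l}\cdot e)}{(\underline{l}\cdot C)}$ along an elliptic fibration $X\to X/K(M)^0_{\mathrm{red}}$, whose fibre is an elliptic curve $C$. Hence $e\equiv mC$ numerically for some integer $m$, and $(\underline{l}\cdot e)>0$ forces $m\geq1$. Therefore
\[
(\underline{l}\cdot C)\leq(\underline{l}\cdot e)\leq\sqrt d<\frac{x_0-1}{y_0}.
\]

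Equivalently, for Step 3 one can use the fact that the nef cone of an abelian surface is the closure of the positive cone, so $e$ is nef with $e^2=0$. On an abelian surface, such a class is a positive multiple of the class of an elliptic curve.

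I expect Step 3 to be the only non-formal point: an isotropic class with $(\underline{l}\cdot e)>0$ must be a positive multiple of an elliptic curve class. I would cite this from the theory of line bundles with degenerate Hermitian form on abelian varieties. The lattice part in Steps 1 and 2 is routine once one notices that $\det=-1$ forces $\Lambda\simeq U$.
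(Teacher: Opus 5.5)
Your proof is correct and is essentially the paper's argument: the paper writes down the two isotropic classes $\frac{x_0+1}{2A}\ul-\frac{d}{A}\uh$ and $\frac{d}{B}\uh-\frac{x_0-1}{2B}\ul$ explicitly, with $A=\gcd(\frac{x_0+1}{2},d)$ and $B=\gcd(\frac{x_0-1}{2},d)$. These are, up to sign and order, exactly your hyperbolic basis, and the paper obtains $\ul=A[C]+B[C']$ with $AB=d$, so $\{p,q\}=\{A,B\}$. The only differences are that you get the basis from the classification of even unimodular lattices rather than by factoring $(t\ul-\uh)^2$, and that you only need $e$ to be a positive multiple of an elliptic curve class, which lets you skip the paper's step $ss'(C\cdot C')=1$.
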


\begin{proof}
Since $(t \ul-\uh)^2=2 d t^2 -2 x_0 t + 2y_0^2= 2d (t- \frac{x_0+1}{2d})(t-\frac{x_0-1}{2d})$,
\begin{align*}
\frac{x_0+1}{2d} \ul -\uh , \quad \uh - \frac{x_0-1}{2d} \ul 
\end{align*}
are nef with self-intersection number zero.
Let $A=\gcd(\frac{x_0+1}{2}, d) , \ B=\gcd(\frac{x_0-1}{2}, d) $.
Since $\frac{x_0+1}{2}, \frac{x_0-1}{2} $ are coprime
and $\frac{x_0+1}{2} \cdot \frac{x_0-1}{2} = d y_0^2$ is divisible by $d$,
we have $AB=d$.

Since any nef divisor on an abelian variety is algebraically equivalent to an effective divisor by \cite[Lemma 1.1]{MR1646050},
\begin{align}\label{eq_trivial_solution}
\frac{x_0+1}{2A} \ul - \frac{d}{A}\uh =s [C] , \qquad \frac{d}{B}\uh - \frac{x_0-1}{2B} \ul =s'[C'] 
\end{align}
for some elliptic curves $C,C'$ and positive integers $s, s'$.
Since
\[
ss'(C \cdot C') = \frac1{AB}\left( \left(\frac{x_0+1}{2} \ul - d \uh \right) \cdot  \left(d\uh - \frac{x_0-1}{2} \ul  \right) \right) =\frac{d(x_0^2-4dy_0^2)}{AB} =1,
\]
we have $s=s'=(C \cdot C')=1$.
Then  $ \underline{l} = A[C] + B[C']$ by \ref{eq_trivial_solution} and hence
\[
\min \{(\ul \cdot C), (\ul \cdot C') \} =\min\{B,A\} \leq \sqrt{d} =\frac{\sqrt{x_0^2-1}}{2y_0} < \frac{x_0-1}{y_0},
\]
where the last inequality holds by $x_0 \geq 3$.
\end{proof}

\begin{proof}[Proof of Proposition \ref{prop_d=2,6}]
If $f \colon E \to F $ is injective and $\coker f$ is torsion-free,
then by replacing $E,F$ with suitable twists $t_p^* E \otimes P', t_p^* F \otimes P''$ for some $p\in X$ and $P',P'' \in \widehat{X}$,
we obtain $0 \to E \to F \to \cali_o \to 0$ from Lemma \ref{lem_resolution_I_p} (3).
Hence it suffices to show that $f$ is injective and $\coker f$ is torsion-free.
Since we assume that there is no elliptic curve $C$ with $(\ul \cdot C) < \frac{x_0-1}{y_0}$,
it suffices to show that $f$ is injective by Lemma \ref{lem_resolution_I_p} (2).

Assume that $f$ is not injective.
By Lemma \ref{lem_resolution_I_p} (1), $r=\rank (\im f) $ and $\uh=-\ch_1(\im f)$ satisfy
\begin{align*}
1 &\leq r < \rank E=\frac{x_0-1}{2}\\
\frac{x_0-1}{y_0} &< \frac{(\underline{l} \cdot \underline{h})}{r} < \frac{x_0+1}{y_0} ,\\
2d (\uh^2) &= (\ul)^2 \cdot (\uh)^2 \leq (\ul \cdot \uh)^2,\\
\left(\frac1r \underline{h} -\frac{2 y_0}{x_0+1}  \underline{l} \right)^2 &= \frac{(\uh^2)}{r^2} -\frac{4y_0}{(x_0+1)r} (\ul \cdot \uh) + \frac{2(x_0-1)}{x_0+1} \geq 0,\\
 \left( \frac{2 y_0}{x_0-1}  \underline{l}-\frac{1}r\underline{h} \right)^2 &= \frac{(\uh^2)}{r^2} -\frac{4y_0}{(x_0-1)r} (\ul \cdot \uh) + \frac{2(x_0+1)}{x_0-1} \geq 0.
\end{align*}

If $d=2$, there is no such $r$ since $x_0=3$.
If $d =3,5,6$, then $(x_0,y_0) =(7,2), (9,2), (5,1)$, respectively. 
Then we can check that $r, (\ul \cdot \uh), (\uh^2)$ satisfy \ref{eq_trivial_solution0} and hence  there exists an elliptic curve $C$ on $X$ such that $(\ul \cdot C) < \frac{x_0-1}{y_0}$ by Lemma \ref{lem_trivial_solution}, which contradicts the assumption.
Hence $f$ is injective and this proposition is proved.
\end{proof}

\begin{rem}
If $d=7$, then $(x_0,y_0)=(127,24)$ and hence $r=29 \neq y_0, \, \uh = 11 \ul $ satisfy the numerical conditions in Lemma \ref{lem_resolution_I_p} (1).
Thus, the argument for the injectivity of $f$ in the proof of Proposition \ref{prop_d=2,6} does not work for $d=7$.
\end{rem}

\begin{ex}\label{ex_d=2}
Let $X$ be an abelian surface such that there exist elliptic curves $C_1,C_2 $ on $X$ with $\NS(X)=\Z [C_1] \oplus \Z[C_2]$ and $(C_1 \cdot C_2)=2$.
For the class $\un$ of $N=a C_1+bC_2$ with $a,b >0$,
we can determine $\beta(\un) $
as follows.

Let $\ul=[C_1] + [C_2]$, whose type  is $(1,2)$.
Since the minimal solution to  $x^2-8d y^2=1$  is  $(x_0,y_0)=(3,1)$ and  there is no elliptic curve $C$ with $(\ul \cdot C ) < \frac{x_0-1}{y_0} =2$,
there exists an exact sequence $0 \to E_{- \underline{l}} \to E_{-\frac12 \underline{l}} \to \cali_o \to 0 $ by Proposition \ref{prop_d=2,6}.
Hence it holds that 
\begin{align*}
 \beta(\un) = \frac{2 \cdot 1}{3-1} \cdot \frac{2(a+b) - \sqrt{4(a+b)^2 -16ab} }{4ab} 
 =\begin{cases}
 \frac1{b} & \text{ if } \ \frac12 b \leq  a \leq b \\
 \frac1a &  \text{ if }  \ b \leq  a \leq 2b
\end{cases}
\end{align*}
for $ \frac12 b \leq  a \leq 2b$ by Proposition \ref{prop_h^i}.

For $ a \geq 2b$, we consider an exact sequence $0 \to \calo_X(-C_1) \to \cali_o \to \cali_{o/C_1} \to 0$.
Since $-C_1+ x N$ is ample if $x > \frac1{a}$, 
\begin{align*}
(h^0_{\calo_X(-C_1),\underline{n}} (x) ,h^1_{\calo_X(-C_1),\underline{n}} (x) , h^2_{\calo_X(-C_1),\underline{n}} (x)  ) =(\chi_{\calo_X(-C_1),\underline{n}} (x),0,0) 
\end{align*}
holds for $x > \frac1{a}$.
On the other hand, $\calo_{C_1}(-o) + x N|_{C_1} =\cali_{o/C_1} + x N|_{C_1}$ on $C_1$ is ample if and only if $ x > \frac1{2b}$.
Hence 
\begin{align*}
(h^0_{ \cali_{o/C_1} ,\underline{n}} (x), h^1_{ \cali_{o/C_1} ,\underline{n}} (x) , h^2_{ \cali_{o/C_1} ,\underline{n}} (x) ) 
=\begin{cases}
(0,-\chi_{ \cali_{o/C_1} ,\underline{n}} (x), 0)   & \text{ if }  \ x <  \frac1{2b} \\[1mm]
(\chi_{ \cali_{o/C_1} ,\underline{n}} (x), 0,0)   &  \text{ if } \ x > \frac1{2b}
\end{cases}
\end{align*}
holds. Thus we have $\beta(\un) =\frac1{2b}$ for $a \geq 2b$.
Similarly, $\beta(\un) =\frac1{2a}$ holds for $b \geq 2a$.

Hence we obtain
\begin{align*}
 \beta(\un) =\begin{cases}
  \frac1{2a} & \text{ if }  \ a \leq \frac12 b  \\[1mm]
 \frac1{b} & \text{ if }  \ \frac12 b \leq  a \leq b \\[1mm]
 \frac1a &  \text{ if } \  b \leq  a \leq 2b\\[1mm]
  \frac1{2b} & \text{ if } \  2b \leq a .
\end{cases}
\end{align*}
\end{ex}

\section{Proof of Corollary \ref{cor_irrational_Intro}}\label{sec_proof_cor}

\begin{proof}[Proof of Corollary \ref{cor_irrational_Intro}]
If 
\begin{align}\label{eq_4,34,12}
(\underline{l}^2)=4, \quad (\underline{n}^2)=34, \quad (\underline{l}\cdot \underline{n}) =12,
\end{align}
then $(x_0,y_0)=(3,1)$ and
\[
 \frac{2y_0}{x_0-1} \cdot  \frac{(\underline{l} \cdot \underline{n}) -\sqrt{(\underline{l} \cdot \underline{n})^2 -(\underline{l}^2) (\underline{n}^2)}}{(\underline{n}^2)}= \frac{6-\sqrt{2}}{17}.
\]
Moreover, the  inequality  $x_0^2 (\underline{n}^2) \geq 2 y_0^2 (\underline{l} \cdot \underline{n})^2$ holds.
Thus, by Propositions \ref{prop_h^i} and \ref{prop_resolution_Rojas} (or  \ref{prop_d=2,6} for $d=2$),  it suffices to show the existence of $X,\underline{l}$, and $\underline{n}$ satisfying \ref{eq_4,34,12} and $\NS(X) = \Z \underline{l} \oplus \Z\underline{n}$.

Let $ \Lambda $ be a lattice of rank six with a basis $\{e_1,\dots,e_6\}$ such that
\begin{align*}
(e_1 \cdot e_2)=(e_3\cdot e_4)=(e_5 \cdot e_6) =1
\end{align*}
and $(e_i \cdot e_j)=0$ otherwise.
Let $\Lambda' = \Z (e_1+e_2) \oplus \Z(e_3-2e_4) \subset \Lambda$, which is a lattice of signature $(1,1)$.
For  general $\omega$ in 
\[
\{p\in \Lambda \otimes_{\Z} \C \mid (p^2)=0, (p \cdot \bar{p}) >0 , (p \cdot (e_1+e_2)) =(p \cdot (e_3-2e_4))=0\},
\]
we have $\omega^{\perp} \cap \Lambda =\Lambda'$.
By the surjectivity of the period map for abelian surfaces \cite[Theorem II]{MR480530},
there exists an abelian surface $X$ with $\NS(X) \simeq  \omega^{\perp} \cap \Lambda =\Lambda'$.
Let $\underline{h}_1,\underline{h}_2 \in \NS(X) $ be the elements corresponding to $e_1+e_2,e_3-2e_4 \in \Lambda'$, respectively.
We have $(\underline{h}_1^2) = 2, (\underline{h}_2^2)=-4, (\underline{h}_1 \cdot \underline{h}_2)=0$.
Then $\underline{l}= 2\underline{h}_1+\underline{h}_2$ and $\underline{n}=5\underline{h}_1+2\underline{h}_2$ satisfy \ref{eq_4,34,12}.
Since either $\underline{l}$ or $-\underline{l}$ is ample, we may assume that $\underline{l}$ is ample by replacing $\underline{h}_1,\underline{h}_2$ with $-\underline{h}_1,-\underline{h}_2$ if necessary.
Then $(\underline{n}^2) >0$ and $(\underline{l} \cdot \underline{n}) >0$ imply that $\underline{n}$ is also  ample.
Since $\Z\underline{l} \oplus \Z \underline{n} = \Z\underline{h}_1 \oplus \Z \underline{h}_2 =\NS(X)$, this corollary is proved.
\end{proof}

\bibliographystyle{amsalpha}
\providecommand{\bysame}{\leavevmode\hbox to3em{\hrulefill}\thinspace}
\providecommand{\MR}{\relax\ifhmode\unskip\space\fi MR }
\providecommand{\MRhref}[2]{%
  \href{http://www.ams.org/mathscinet-getitem?mr=#1}{#2}
}
\providecommand{\href}[2]{#2}

\end{document}